\journalname{JOTA}
\begin{document}

\title{The Euler-Lagrange and Legendre  Necessary Conditions for Fractional Calculus of Variations}

%\subtitle{Using  the  LaTex Template}

\author{Shikhi Sh. Yusubov \and Shakir Sh. Yusubov \and Elimhan N. Mahmudov}

\institute{Shikhi Sh. Yusubov \at
             Department of Mathematics,
Shanghai University\\
              Shanghai, China\\
             yusubovsixi@gmail.com
           \and
            Shakir Sh. Yusubov \at
             Baku State University, Department of Mechanics and Mathematics \\
             Baku, Azerbaijan\\
              yusubov\_shakir@mail.ru
           \and
              Elimhan N. Mahmudov,  Corresponding author  \at
              Azerbaijan National Aviation Academy \\
             Baku, Azerbaijan,\\
              Azerbaijan National Academy of Sciences, Institute of Control Systems\\
               Baku, Azerbaijan,\\
Research Center for Mathematical Modeling and Optimization, UNEC\\ Baku, Azerbaijan,
              elimhan22@ yahoo.com; ORCID:0000-0003-2879-6154
}

\date{Received: date / Accepted: date}
%The correct dates will be entered by the editor.

\maketitle

\begin{abstract}
In this paper, we study the problems of minimizing  a functional
depending on the Caputo fractional derivative of order $0< \alpha
\leq 1$ and the Riemann- Liouville fractional integral of order
$\beta >0$ under certain constraints.  A fractional analogue of
the Du Bois-Reymond lemma is proved. Using this lemma for various
weak local minimum problems, the Euler-Lagrange equation is
derived in integral form. Some serious works in the literature claim that the standard proof of the Legendre condition in the classical case $\alpha=1$ cannot be adapted to the fractional case $0<\alpha <1$ with final constraints. In spite of this, we prove the Legendre conditions using the standard classical method. The obtained
necessary conditions are illustrated by appropriate examples. 
\end{abstract}
\keywords{Fractional calculus of variations \and
Euler-Lagrange equation \and Legendre condition \and Caputo derivative}
\subclass{26A33 \and 49K99 \and 49K05}

%All acknowledgements should be placed in the back of the paper after Conclusions..

\section{Introduction}

The main problem of the calculus of variations arose as a direct
generalization of the brachistochrone problem posed by I. Bernulli
in 1696. This problem contained typical features of a new class of
mathematical problems and throughout the history of the calculus
of variations served as both an object of testing new methods and
a basis for many interesting and important generalizations. For
the first time in 1742, Euler, approximating curves with broken
lines, derived the necessary conditions in the form of a
second-order differential equation, which the extremals had to
satisfy. Lagrange later called this equation the Euler equation. Lagrange
himself derived this equation in 1755 by varying the curve that
was supposed to be the extremum. The method of variations proposed
by Lagrange is the main method of theoretical study of
extremal problems in functional spaces. In this method, the type
of variation used plays a decisive role, since both the simplicity
and the strength of the resulting necessary condition largely
depend on it. When studying variational problems, the space of
functions on which the functional is considered plays an important
role (see, e.g. \cite{15,20,31}).

Since the end of the 20th century, the emergence of applied
problems related to fractional integrals and derivatives in
various fields (see, e.g., \cite{12,21,22}) has led to
intensive study of problems expressed by fractional integrals and
derivatives (see, e.g. \cite{23,28,29,30}). One such field is
the calculus of variations (various necessary conditions for optimality of the first and higher order are obtained for various systems \cite{1n,32n,22n,23n,24n,25n,33n,35n}). The article \cite{21n} considers an optimal control problem controlled by nonlinear fractional-order systems with multiple time-variable delays and subject to canonical constraints, where fractional-order derivatives are expressed in the Caputo sense.

The first question that arises when we want to obtain results
similar to those obtained in the classical calculus of variations
in the fractional calculus of variations is: what should be the
integral functional whose extremum is sought, and the space of
functions in which this functional is defined?

It is known that the integral functional studied in the classical
calculus of variations is a generalization of the integral
functional in the brachistochrone problem, coming directly from
practice. In fractional variational calculus, as for as we know,
the integral functional that follows from practice is not yet
known. Therefore, in the early years of fractional variational
calculus, the integral functional was considered in the same way
as in classical variational calculus (see, e.g.,
\cite{2,4,5,6,7,9,24}). It should be noted that the fractional
Euler-Lagrange equation was first formulated in the literature in
paper \cite{27}, where it was shown that nonconservative forces are modeled by noninteger derivatives, which, due to their non-local nature, can better describe the behavior of some natural processes and attracted the attention of not only mathematicians, but also physicists, engineers, and others. In the paper \cite{5} two variational problems of finding the Euler–Lagrange equations corresponding to Lagrangians containing fractional derivatives of real- and complex-order are considered. The first one is the unconstrained variational problem, while the second one is the fractional optimal control problem.

In 2019, it was shown in \cite{14} that to ensure the existence of a
solution in fractional variational calculus, it is necessary to
take into account the fractional Riemann-Liouville integral. In
the integral functional for which the extremum is sought is not a
Riemann-Liouville fractional integral, then the problem under
consideration may not have a solution. In this paper, the integral
functional is considered in the form of a fractional
Riemann-Liouville integral of order $0< \alpha \leq 1$, and the
Euler-Lagrange equation is obtained in integral and differential
form. In paper \cite{10}, a more general problem is considered, namely
the problem of minimizing the general Bolza functional, which
contains the Mayer cost, as well as the Lagrange cost, written as
a fractional Riemann-Liouville integral of order $\beta >0$ and
depending on the fractional Caputo derivative of order $0<\alpha
\leq 1$. Further, the necessary first-order optimality
condition (Euler-Lagrange differential equation with
Riemann-Liouville fractional  derivative), transversality
conditions and Legendre condition (second order) (see  Theorem
3.1 \cite{10}) are obtained for the case when the initial and final
conditions  are not fixed. But the relationship between the
parameters $0< \alpha \leq 1$ and $\beta >0$ has not been studied.
It is said there that the necessary first order optimality
condition (Euler-Lagrange equation) and transversality conditions
can be obtained similarly under the following special cases:

(i) fixed initial and free final condition;

(ii) free initial and fixed final condition;

(iii) fixed initial and fixed final condition.

In their opinion, the standard proof of the Legendre condition (second order) in the
classical case $\alpha=1$ cannot be adapted to the fractional case $0< \alpha < 1$ when dealing with final constraints. They explain this by the fact that there is no non-trivial function
which, together with its fractional Caputo derivative of order $0< \alpha < 1$, would have
compact support. Since the proof of the Legendre condition in the classical case $\alpha=1$
uses a non-trivial variation which, together with its usual derivative, has compact support.
The authors of \cite{10} called this fact an obstacle to the proof of the Legendre condition in the presence of finite constraints and concluded that the method used in the classical theory cannot be extended to the fractional case and having overcome this difficulty, found an alternative proof of the Legendre condition in the classical case $\alpha=\beta=1$. They adapted the well-known proof of Pontryagin’s maximum principle \cite{8}, based on Ekland’s variational principle \cite{13}, to a fractional problem of the calculus of variations with general mixed initial-final constraints. The Euler-Lagrange equation, transversality conditions and the Legendre condition are obtained (see \cite{10} Theorem 3.2). L. Bourdin’s doctoral dissertation \cite{11} argued that a more natural fractional problem, which avoids the trap of studying a fractional calculus of variations problem, is to take $\alpha=\beta$.

While reading the works \cite{10,11}, we had several questions. First, we had
doubts about whether it is really impossible to obtain the
Legendre necessary condition using the methods of classical
calculus of variations when dealing with final constraints, and
second, whether it is possible to investigate the relationship
between the parameters $\alpha$ and $\beta$ when obtaining the
necessary optimality conditions. However, so far we have not been
able to find a work in the literature that would adapt the
standard proof of the Legendre condition in the classical case
$\alpha=1$ to the fractional case $0<\alpha<1$ when dealing with
final constraints. So we really wanted to fill that gap and do
some more in-depth research into these types of issues. It should
be noted the starting point for this work was works \cite{10,11}.

In the present paper, we consider a fractional variational problem for
which the functional is written as a fractional Riemann-Liouville
integral of order $\beta >0$ and depends on the fractional Caputo
derivative of order $0<\alpha \leq 1.$ We are looking for
solutions to the stated fractional variational problem in the
class of functions for which the functions together with the
fractional Caputo derivatives of order $0 < \alpha \leq 1$ are
continuous (or in the class of functions for which the functions
are continuous and the fractional Caputo derivatives of order $0<
\alpha \leq 1$ are piecewise continuous) on a given interval. It
should be noted that the functional and the space of functions are
a direct generalization of the classical case when
$\alpha=\beta=1.$ As in the classical case, we first prove a lemma
that is a direct generalization of the classical Du Bois-Reymond
lemma. It is shown here that the relationship between the
parameters $\beta$ and $\alpha$ must be taken into account. This
lemma for $\beta=\alpha=1$ agrees with then classical Du Bois-Reymond lemma, and for $0<\beta=\alpha \leq 1$ agrees with the
lemma obtained in article \cite{14}. Further, using this lemma for the
cases considered above ((i), (ii), (iii) and both ends are free),
the necessary Euler-Lagrange conditions for a weak local minimum
are proved, for which, in contrast to the Euler-Lagrange condition
obtained in work \cite{10}, the connection between the parameters
$\beta$ and $\alpha$ is directly visible. The importance of this
relationship is illustrated by the relevant example in Remark 4.1.
Corresponding illustrative examples are provided. Since in the
classical calculus of variations the type of variation used plays
a decisive role, it turns out that in the fractional calculus of
variations the type of variation used also plays an important
role. We, by introducing a special variation depending on the
parameter $k$,in contrast to paper \cite{10}, show that the standard
proof of the Legendre condition in the classical case
$\alpha=1$ can be adapted to the fractional case $0< \alpha <1$
when dealing with final constraints. In cases (ii) and (iii) we
have proved the Legendre conditions, and in the other two cases
the Legendre conditions were proved in papers \cite{10,11,18,33,32}.

It is known that the modern approach to deriving the classical
Euler-Lagrange equation essentially consists of using the formula
of integration by parts and the Du Bois-Reymond lemma. In our
approach we do not use integration by parts, but only with the
help of the generalized Du Bois-Reymond lemma we obtain the
Euler-Lagrange equations for $0< \alpha \leq 1$ and $\beta >0.$
Remark 4.2 shows that for $\beta=\alpha=1$ the necessary condition
we have obtained differs in form from the classical one, but from
this condition we immediately obtain the classical case. From
Theorems \ref{teo45} and \ref{theo46} it follows that in problems where the right
ends are free, then for the existence of solutions of such
problems for $\beta> \alpha>0$ along the extremal it is necessary
that $l_{x_{1}}=0.$\par
Thus, the main novelty of this work lies in one of the possible
approaches to proving the first and second order optimality
criteria for a fractional variational problem using the
generalized Du Bois-Reymond lemma, namely the Euler-Lagrange
equation and the Legendre condition for a weak minimum.

The rest of the paper is organized as follows. 

In Section 2, the
definitions and basic properties of fractional order integrals and
derivatives are recalled, and also some preliminary results. 

In
Section 3, the fundamental Du Bois-Reymond lemma for fractional
variational calculus is proved and the continuity of a certain
function that plays an important role in the study Euler-Lagrange
equation is proved.

Section 4 is devoted to the proof of the necessary Euler-Lagrange
and Legendre conditions for four cases. Corresponding illustrative
examples are given.
\section{Notations  and basics from fractional calculus}\label{sec1}
This section is devoted to recall basic definitions and results
about Riemann-Liouville and Caputo fractional operators. All of
the presented below is very standard and mostly extracted from the
monographs \cite{23,29}.

Let $\mathbb{R}^{n}$ and $\mathbb{R}^{n\times n}$ be the spaces of
$n$- dimensional vectors and $(n \times n)$- matrices. By
$\|\cdot\|$, we denote a norm in $\mathbb{R}^{n}$ and the
corresponding norm in $\mathbb{R}^{n \times n}$. Let numbers
$t_{0}, \ t_{1} \in \mathbb{R}$, $t_{0}<t_{1}$ be fixed, and let
$X$ be one of the spaces $\mathbb{R}^{n }$ or $\mathbb{R}^{n
\times n}$. By $\langle a, \ b \rangle$ we denote the scalar
product of vectors $a$ and $b$.

Let $C([t_{0}, t_{1}], X)$ the space of continuous functions on
$[t_{0}, t_{1}]$ with values in $X$, endowed with the uniform norm
$\|\cdot\|_{c}.$  $L^{1}([t_{0}, t_{1}], X)$ the Lebesgue space of
summable functions defined on $[t_{0}, t_{1}]$ with values in $X$,
endowd with its usual norm $\|\cdot\|_{L^{1}}.$
$L^{\infty}([t_{0}, t_{1}], X)$ the Lebesgue space of essentially
bounded functions defined on $[t_{0}, t_{1}]$ with values in $X$,
endowed with its usual norm $\|\cdot\|_{L^{\infty}}.$

\begin{definition}\label{def21} Let $\alpha>0.$ For a function
$\varphi:[t_{0}, t_{1}]\rightarrow X,$ the left-sided and
right-sided Riemann-Liouville fractional integrals of the order
$\alpha$ are defined for $t \in [t_{0}, t_{1}]$ by
$$
(I_{t_{0}+}^{\alpha}\varphi)(t)=\frac{1}{\Gamma(\alpha)}\int\limits_{t_{0}}^{t}(t-\tau)^{\alpha-1}\varphi(\tau)d\tau\,\,\,
\texttt{and} \,\,\,
(I_{t_{1}-}^{\alpha}\varphi)(t)=\frac{1}{\Gamma(\alpha)}\int\limits_{t}^{t_{1}}(\tau-t)^{\alpha-1}\varphi(\tau)d\tau,
$$
respectively. For $\alpha=0$ we set $(I_{t_{0}+}^{\alpha}\varphi):=\varphi$ and $(I_{t_{1}-}^{\alpha}\varphi):=\varphi$.

If $\varphi \in L^{\infty}([t_{0}, t_{1}], X),$ then the above
functions are defined and finite everywhere on $[t_{0}, t_{1}]$.
\end{definition}
\begin{definition}\label{def22}Let $\alpha \in [0, \ 1],$ and $\varphi
\in L^{1}([t_{0}, t_{1}], X).$ For a function $\varphi$ the
left-sided and right-sided Riemann-Liouville fractional
derivatives of the order $\alpha$ are defined for $t \in [t_{0},
t_{1}]$ by
$$
(D_{t_{0}+}^{\alpha}\varphi)(t)=\frac{d}{dt}(I_{t_{0}+}^{1-\alpha}\varphi)(t)=
\frac{1}{\Gamma(1-\alpha)}\frac{d}{dt}
\int\limits_{t_{0}}^{t}(t-\tau)^{-\alpha}\varphi(\tau)d\tau\,
$$
and
$$
(D_{t_{1}-}^{\alpha}\varphi)(t)=-\frac{d}{dt}(I_{t_{1}-}^{1-\alpha}\varphi)(t)=
-\frac{1}{\Gamma(1-\alpha)}\frac{d}{dt}
\int\limits_{t}^{t_{1}}(\tau-t)^{-\alpha}\varphi(\tau)d\tau,
$$
if $(I_{t_{0}+}^{1-\alpha}\varphi)(\cdot)$ and
$(I_{t_{1}-}^{1-\alpha}\varphi)(\cdot)$  are absolutely
continuous functions on $[t_{0}, t_{1}]$.
\end{definition}
\begin{definition}\label{def23} Let $\alpha \in [0, \ 1],$ and $\varphi
\in C([t_{0}, t_{1}], X).$ For a function $\varphi$ the left-sided
and right-sided Caputo  fractional derivatives of the order
$\alpha$ are defined for $t \in [t_{0}, t_{1}]$ by
$$
(^{c}D_{t_{0}+}^{\alpha}\varphi)(t)=\frac{d}{dt}(I_{t_{0}+}^{1-\alpha}(\varphi(\cdot)-\varphi(t_{0})))(t)=
\frac{1}{\Gamma(1-\alpha)}\frac{d}{dt}
\int\limits_{t_{0}}^{t}(t-\tau)^{-\alpha}(\varphi(\tau)-\varphi(t_{0}))d\tau\,
$$
and
$$
(^{c}D_{t_{1}-}^{\alpha}\varphi)(t)=-\frac{d}{dt}(I_{t_{1}-}^{1-\alpha}(\varphi(\cdot)-\varphi(t_{1})))(t)
$$
$$
=-\frac{1}{\Gamma(1-\alpha)}\frac{d}{dt}
\int\limits_{t}^{t_{1}}(\tau-t)^{-\alpha}(\varphi(\tau)-\varphi(t_{1}))d\tau,
$$
if $(I_{t_{0}+}^{1-\alpha}(\varphi(\cdot)-\varphi(t_{0})))(\cdot)$
and
$(I_{t_{1}-}^{1-\alpha}(\varphi(\cdot)-\varphi(t_{1})))(\cdot)$
are absolutely continuous functions on $[t_{0}, t_{1}]$.

By $C^{\alpha}([t_{0}, t_{1}], X),$ $0<\alpha \leq 1,$ we denote
the set of all functions $\varphi:[t_{0}, t_{1}]\rightarrow X,$
such that
$$
\varphi(t)=\varphi(t_{0})+(I_{t_{0}+}^{\alpha}\psi)(t), \,\,\, t
\in [t_{0}, t_{1}],
$$
with $\psi \in C([t_{0}, t_{1}], X).$

$$
C_{0}^{\alpha}([t_{0}, t_{1}], X)=\{\varphi \in C^{\alpha}([t_{0},
t_{1}], X| \varphi(t_{0})=\varphi(t_{1})=0\}.
$$
\end{definition}
\begin{proposition}\label{pro21}
For any $\varphi \in C^{\alpha}([t_{0},
t_{1}], X),$ $0<\alpha \leq 1,$ the value
$(^{c}D_{t_{0}+}^{\alpha}\varphi)(t)$ is correctly defined for
every $t \in [a, \ b].$ Moreover, the inclusion
$(^{c}D_{t_{0}+}^{\alpha}\varphi)$ $\in C([t_{0}, t_{1}], X)$ holds
(i.e., there exists $\psi \in C([t_{0}, t_{1}], X)$ such that
$\psi(t)$ $=(^{c}D_{t_{0}}^{\alpha}\varphi)(t)$ for every $t \in
[t_{0}, t_{1}]$) and
$$
(I_{t_{0}+}^{\alpha}(^{c}D_{t_{0}+}^{\alpha}\varphi))(t)=\varphi(t)-\varphi(t_{0}),
\,\,\, t \in [t_{0}, t_{1}].
$$
\end{proposition}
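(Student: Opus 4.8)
The plan is to establish Proposition~\ref{pro21} by unwinding the definition of the space $C^{\alpha}([t_{0},t_{1}],X)$ and exploiting the semigroup-type relation between the Riemann-Liouville integral and the Caputo derivative. By definition, a function $\varphi \in C^{\alpha}([t_{0},t_{1}],X)$ can be written as $\varphi(t)=\varphi(t_{0})+(I_{t_{0}+}^{\alpha}\psi)(t)$ for some $\psi \in C([t_{0},t_{1}],X)$. The natural candidate for $(^{c}D_{t_{0}+}^{\alpha}\varphi)$ is precisely this $\psi$, so the core of the argument is to verify that applying the Caputo derivative to $\varphi$ recovers $\psi$ exactly.

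First I would substitute the representation of $\varphi$ into the definition of the Caputo derivative. Since $\varphi(\cdot)-\varphi(t_{0})=(I_{t_{0}+}^{\alpha}\psi)(\cdot)$, I need to compute $\frac{d}{dt}(I_{t_{0}+}^{1-\alpha}(I_{t_{0}+}^{\alpha}\psi))(t)$. The key tool here is the semigroup property of Riemann-Liouville integrals, namely $I_{t_{0}+}^{1-\alpha}I_{t_{0}+}^{\alpha}=I_{t_{0}+}^{1}$, which holds for continuous (hence $L^{1}$) functions $\psi$. This reduces the expression to $\frac{d}{dt}(I_{t_{0}+}^{1}\psi)(t)=\frac{d}{dt}\int_{t_{0}}^{t}\psi(\tau)\,d\tau$. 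Since $\psi$ is continuous, the fundamental theorem of calculus gives $\frac{d}{dt}\int_{t_{0}}^{t}\psi(\tau)\,d\tau=\psi(t)$, so that $(^{c}D_{t_{0}+}^{\alpha}\varphi)(t)=\psi(t)$ for every $t \in [t_{0},t_{1}]$. This simultaneously shows that the Caputo derivative is well-defined everywhere on the interval and that it coincides with the continuous function $\psi$, establishing the inclusion $(^{c}D_{t_{0}+}^{\alpha}\varphi)\in C([t_{0},t_{1}],X)$.

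For the final identity, I would apply $I_{t_{0}+}^{\alpha}$ to both sides of $(^{c}D_{t_{0}+}^{\alpha}\varphi)=\psi$, obtaining $(I_{t_{0}+}^{\alpha}(^{c}D_{t_{0}+}^{\alpha}\varphi))(t)=(I_{t_{0}+}^{\alpha}\psi)(t)=\varphi(t)-\varphi(t_{0})$, which is exactly the claimed reconstruction formula. This step is essentially immediate once the identification of the Caputo derivative with $\psi$ is in hand.

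I expect the main technical obstacle to be the rigorous justification of the semigroup property $I_{t_{0}+}^{1-\alpha}I_{t_{0}+}^{\alpha}\psi=I_{t_{0}+}^{1}\psi$ together with the absolute continuity of $I_{t_{0}+}^{1-\alpha}(I_{t_{0}+}^{\alpha}\psi)$ that is required for the Caputo derivative in Definition~\ref{def23} to make sense. The semigroup law follows from Fubini's theorem and a Beta-function computation, and the resulting function $I_{t_{0}+}^{1}\psi$ is a primitive of a continuous function, hence $C^{1}$ and in particular absolutely continuous; this validates the interchange of the derivative with the integral operator and confirms that the differentiation step is legitimate. These are standard facts in fractional calculus (available in the cited monographs), so the proof is conceptually short, with the only care needed being the verification that the continuity of $\psi$ supplies exactly the regularity the definitions demand.
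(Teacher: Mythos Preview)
Your argument is correct and is precisely the standard derivation: represent $\varphi-\varphi(t_{0})$ as $I_{t_{0}+}^{\alpha}\psi$, use the semigroup identity $I_{t_{0}+}^{1-\alpha}I_{t_{0}+}^{\alpha}\psi=I_{t_{0}+}^{1}\psi$ together with the fundamental theorem of calculus to identify $(^{c}D_{t_{0}+}^{\alpha}\varphi)$ with $\psi$, and then read off the reconstruction formula.

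There is no comparison to make with the paper's proof, because the paper does not supply one: Proposition~\ref{pro21} is stated in the preliminaries section as a standard fact ``mostly extracted from the monographs'' \cite{23,29}, without an accompanying argument. Your write-up is exactly what those references contain, so nothing further is needed.
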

\begin{proposition}\label{pro22}Let $\alpha>0$ and $\varphi \in
C([t_{0}, t_{1}], X),$ then
$(^{c}D_{t_{0}+}^{\alpha}(I_{t_{0}+}^{\alpha}\varphi))(t)=\varphi(t)$
and
$(^{c}D_{t_{1}-}^{\alpha}(I_{t_{1}-}^{\alpha}\varphi))(t)=\varphi(t)$
for every $t \in [t_{0}, t_{1}].$

By $PC^{\alpha}([t_{0}, t_{1}], \mathbb{R}^{n})$ we denote the set
of functions that are continuous, and their fractional Caputo
derivatives of order $\alpha$ are continuous everywhere on
$[t_{0}, t_{1}],$ except for are finite number of points
$\sigma_{i}\in (t_{0}, t_{1}),$ $i=\overline{1, \ k}.$ In this
case, at the points $\sigma_{i},$ the fractional derivative
$^{c}D_{t_{0}+}^{\alpha}x$ has discontinuities of the first kind.

$$
PC_{0}^{\alpha}([t_{0}, t_{1}], \mathbb{R}^{n})=\{\varphi \in
PC_{0}^{\alpha}([t_{0}, t_{1}],
\mathbb{R}^{n})|\varphi(t_{0})=\varphi(t_{1})=0\}.
$$
\end{proposition}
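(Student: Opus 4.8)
The plan is to reduce the composition $^{c}D_{t_{0}+}^{\alpha}\circ I_{t_{0}+}^{\alpha}$ to the ordinary derivative of the ordinary integral, exploiting two facts: that a Riemann--Liouville integral vanishes at its lower endpoint, and the semigroup (index) law for fractional integrals. First I would fix $\alpha\in(0,1]$ and $\varphi\in C([t_{0}, t_{1}], X)$ and set $\psi:=I_{t_{0}+}^{\alpha}\varphi$. Since $\varphi$ is continuous, hence bounded, the integral defining $\psi(t)$ is absolutely convergent and $\psi$ is continuous on $[t_{0}, t_{1}]$; in particular $\psi(t_{0})=0$, because the integral over the degenerate interval $[t_{0},t_{0}]$ is zero. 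This is the key simplification: in the Caputo derivative the subtracted term $\psi(t_{0})$ disappears, so that
$$
(^{c}D_{t_{0}+}^{\alpha}\psi)(t)=\frac{1}{\Gamma(1-\alpha)}\frac{d}{dt}\int\limits_{t_{0}}^{t}(t-\tau)^{-\alpha}\psi(\tau)\,d\tau=\frac{d}{dt}(I_{t_{0}+}^{1-\alpha}\psi)(t),
$$
i.e., on functions vanishing at $t_{0}$ the Caputo and Riemann--Liouville derivatives coincide.

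Next I would invoke the composition law $I_{t_{0}+}^{1-\alpha}I_{t_{0}+}^{\alpha}=I_{t_{0}+}^{1}$, valid for continuous (indeed $L^{1}$) functions. Its proof, if spelled out, interchanges the order of integration by Fubini's theorem and evaluates the inner integral through the Beta function, using $B(\alpha,1-\alpha)=\Gamma(\alpha)\Gamma(1-\alpha)$; I would either cite this from the monographs \cite{23,29} or insert the short computation. Applying it gives $(I_{t_{0}+}^{1-\alpha}\psi)(t)=(I_{t_{0}+}^{1}\varphi)(t)=\int_{t_{0}}^{t}\varphi(\tau)\,d\tau$. Finally, since $\varphi$ is continuous, the fundamental theorem of calculus ensures that this primitive is differentiable with derivative $\varphi(t)$ at every $t\in[t_{0}, t_{1}]$, whence $(^{c}D_{t_{0}+}^{\alpha}(I_{t_{0}+}^{\alpha}\varphi))(t)=\varphi(t)$, as claimed. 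The right-sided identity follows identically after replacing $I_{t_{0}+}$ and $^{c}D_{t_{0}+}$ by their right-sided counterparts, the only changes being the kernel $(\tau-t)$ in place of $(t-\tau)$ and an overall sign on the derivative, both of which are accounted for by the mirror symmetry $t\mapsto t_{0}+t_{1}-t$.

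The step I expect to carry the real content is the semigroup law $I^{1-\alpha}I^{\alpha}=I^{1}$, together with the verification that the resulting outer derivative exists in the classical sense and equals $\varphi$ everywhere on the \emph{closed} interval, including the endpoints. By contrast, the reduction of the Caputo derivative to the Riemann--Liouville derivative is immediate once $\psi(t_{0})=0$ is observed, and the concluding application of the fundamental theorem of calculus is routine given the continuity of $\varphi$.
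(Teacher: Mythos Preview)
The paper does not actually prove Proposition~\ref{pro22}; it is one of the ``basic definitions and results'' in Section~\ref{sec1} that the authors describe as ``very standard and mostly extracted from the monographs \cite{23,29}.'' Your argument is the correct standard proof: you observe that $\psi=I_{t_{0}+}^{\alpha}\varphi$ vanishes at $t_{0}$, so the Caputo and Riemann--Liouville derivatives coincide on $\psi$; you then apply the index law $I_{t_{0}+}^{1-\alpha}I_{t_{0}+}^{\alpha}=I_{t_{0}+}^{1}$ and finish with the fundamental theorem of calculus. One minor remark: the proposition is stated for all $\alpha>0$, but since Definition~\ref{def23} only introduces the Caputo derivative for $\alpha\in[0,1]$, your restriction to $\alpha\in(0,1]$ is consistent with the paper's setting. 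The remaining content of the proposition---the introduction of the spaces $PC^{\alpha}$ and $PC_{0}^{\alpha}$---is purely definitional and requires no proof.
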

\begin{lemma}\label{lem21}([Lemma 3.3]\cite{10})  Let $0<\alpha \leq 1.$ The
inequality
$$
0 \leq (\sigma_{2}^{\alpha}-\sigma_{1}^{\alpha})^{2}\leq
\alpha(\sigma_{2}-\sigma_{1})^{\alpha+1}\sigma_{1}^{\alpha-1},
$$
holds true for all $0 < \sigma_{1}\leq \sigma_{2}.$
\end{lemma}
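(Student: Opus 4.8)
The left-hand inequality is immediate, since $(\sigma_{2}^{\alpha}-\sigma_{1}^{\alpha})^{2}$ is a square. For the right-hand inequality the plan is to bound the nonnegative quantity $\sigma_{2}^{\alpha}-\sigma_{1}^{\alpha}$ from above in two genuinely different elementary ways and then multiply the two bounds. The point is that neither bound alone has the correct $(\sigma_{2}-\sigma_{1})^{\alpha+1}$ homogeneity, but their product does.

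First I would use monotonicity of $s\mapsto s^{\alpha-1}$. Writing $\sigma_{2}^{\alpha}-\sigma_{1}^{\alpha}=\int_{\sigma_{1}}^{\sigma_{2}}\alpha s^{\alpha-1}\,ds$ and using that this integrand is nonincreasing on $[\sigma_{1},\sigma_{2}]$ (because $\alpha-1\le 0$ and $s\ge\sigma_{1}>0$), I get
$$
\sigma_{2}^{\alpha}-\sigma_{1}^{\alpha}\le \alpha\,\sigma_{1}^{\alpha-1}(\sigma_{2}-\sigma_{1}),
$$
which is just the tangent-line estimate arising from concavity of $x\mapsto x^{\alpha}$ and is sharp as $\sigma_{2}\to\sigma_{1}$. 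Next I would use subadditivity of $x\mapsto x^{\alpha}$ for $0<\alpha\le 1$: since $(x+y)^{\alpha}\le x^{\alpha}+y^{\alpha}$ for $x,y\ge 0$, taking $x=\sigma_{2}-\sigma_{1}$ and $y=\sigma_{1}$ yields
$$
\sigma_{2}^{\alpha}-\sigma_{1}^{\alpha}\le (\sigma_{2}-\sigma_{1})^{\alpha},
$$
which is sharp as $\sigma_{2}\to\infty$. The subadditivity itself I would justify by normalizing: with $\lambda=x/(x+y)\in[0,1]$ one has $\lambda^{\alpha}+(1-\lambda)^{\alpha}\ge\lambda+(1-\lambda)=1$, because $r^{\alpha}\ge r$ for $r\in[0,1]$ when $\alpha\le 1$, and this gives $x^{\alpha}+y^{\alpha}\ge(x+y)^{\alpha}$.

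Both right-hand sides above are nonnegative (as $\sigma_{2}\ge\sigma_{1}>0$ forces $\sigma_{2}^{\alpha}\ge\sigma_{1}^{\alpha}$), so multiplying the two estimates gives
$$
(\sigma_{2}^{\alpha}-\sigma_{1}^{\alpha})^{2}\le \alpha\,\sigma_{1}^{\alpha-1}(\sigma_{2}-\sigma_{1})\cdot(\sigma_{2}-\sigma_{1})^{\alpha}=\alpha(\sigma_{2}-\sigma_{1})^{\alpha+1}\sigma_{1}^{\alpha-1},
$$
which is exactly the claim. I expect the main obstacle to be conceptual rather than computational: a single direct estimate (Cauchy--Schwarz on the integral, or the Bernoulli bound $\sigma_{2}^{\alpha}-\sigma_{1}^{\alpha}\le\alpha\sigma_1^{\alpha-1}(\sigma_2-\sigma_1)$ squared) only produces the factor $(\sigma_{2}-\sigma_{1})^{2}$, which is too strong for small increments and carries the wrong power, so it fails to close the inequality uniformly. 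Recognizing that one must pair the "linear" concavity bound with the "$\alpha$-Hölder" subadditivity bound, whose product reproduces precisely the exponent $\alpha+1$ together with the constant $\alpha$, is the decisive step; everything else is routine.
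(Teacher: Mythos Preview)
Your proof is correct. The two elementary bounds---the concavity (tangent-line) estimate $\sigma_{2}^{\alpha}-\sigma_{1}^{\alpha}\le \alpha\sigma_{1}^{\alpha-1}(\sigma_{2}-\sigma_{1})$ and the subadditivity estimate $\sigma_{2}^{\alpha}-\sigma_{1}^{\alpha}\le (\sigma_{2}-\sigma_{1})^{\alpha}$---are each valid for $0<\alpha\le 1$ and $0<\sigma_{1}\le\sigma_{2}$, and since both sides are nonnegative their product gives exactly the claimed right-hand inequality.

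Note that the present paper does not supply its own proof of this lemma: it is quoted verbatim from \cite{10} (Lemma~3.3 there) and used as a black box in the estimates of Theorems~\ref{teo42} and~\ref{teo44}. So there is no in-paper argument to compare against; your self-contained derivation is a welcome addition. The closing paragraph about why squaring a single bound fails is accurate motivation but not logically needed for the proof itself.
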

\section{The fundamental lemma of fractional calculus of variations}\label{sec3}
In this section we prove the generalized Du Bois-Reymond lemma,
which plays an essential role in deriving the first-order
necessary condition.
\begin{lemma}\label{lem31}
 Suppose that $f \in C([t_{0},
t_{1}],\mathbb{R}), \,$ $0<\alpha \leq 1$ and $\beta > 0.$ Then to
fulfill equality
\begin{equation}\label{equ1}
\int\limits_{t_{0}}^{t_{1}}(t_{1}-t)^{\beta-1}f(t)(^{c}D_{t_{0}+}^{\alpha}h)(t)dt=0,
\,\,\, \forall h \in C_{0}^{\alpha}([t_{0}, t_{1}], \mathbb{R})
%\eqno(1)
\end{equation}%
it is necessary and sufficient that

1) $(t_{1}-t)^{\beta-\alpha}f(t)=0$ on $[t_{0}, t_{1}],$ if $\beta
> \alpha>0,$

2) $f(t)=\frac{k}{\Gamma(\alpha)}(t_{1}-t)^{\alpha-\beta}$ on
$[t_{0}, t_{1}],$ if $0 < \beta \leq \alpha \leq1$ for some
constant $k\neq 0.$
\end{lemma}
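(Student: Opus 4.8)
The plan is to recast condition (\ref{equ1}) as a constrained orthogonality relation and then reduce it to the classical fundamental lemma of the calculus of variations. First I would use Propositions \ref{pro21} and \ref{pro22} to pass from the variable $h$ to $\psi:={}^{c}D_{t_{0}+}^{\alpha}h$. Since $h \in C_{0}^{\alpha}([t_{0},t_{1}],\mathbb{R})$ forces $h(t)=(I_{t_{0}+}^{\alpha}\psi)(t)$ with $\psi$ continuous, the condition $h(t_{0})=0$ is automatic, while $h(t_{1})=0$ becomes the single linear constraint $\int_{t_{0}}^{t_{1}}(t_{1}-\tau)^{\alpha-1}\psi(\tau)\,d\tau=0$; conversely, every continuous $\psi$ obeying this constraint arises from an admissible $h$ via $h=I_{t_{0}+}^{\alpha}\psi$. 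Writing $w(t):=(t_{1}-t)^{\alpha-1}$ and $g(t):=(t_{1}-t)^{\beta-1}f(t)$, equation (\ref{equ1}) is therefore equivalent to requiring $\int_{t_{0}}^{t_{1}}g\psi\,dt=0$ for every continuous $\psi$ with $\int_{t_{0}}^{t_{1}}w\psi\,dt=0$. Observe that $g,w\in L^{1}$ because $\beta>0$ and $\alpha>0$, so both integrals are finite.

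The core step extracts the proportionality $g=cw$. I would fix the constant function $\psi_{0}\equiv \alpha/(t_{1}-t_{0})^{\alpha}$, so that $\int_{t_{0}}^{t_{1}}w\psi_{0}\,dt=1$, and set $c:=\int_{t_{0}}^{t_{1}}g\psi_{0}\,dt$. For an arbitrary continuous $\phi$, the function $\psi:=\phi-\left(\int_{t_{0}}^{t_{1}}w\phi\,dt\right)\psi_{0}$ is continuous and satisfies the constraint, hence is admissible; substituting it into the reformulated identity yields $\int_{t_{0}}^{t_{1}}(g-cw)\phi\,dt=0$ for every continuous $\phi$. On each subinterval $[t_{0},t_{1}-\varepsilon]$ the function $g-cw$ is continuous, so the classical fundamental lemma gives $g=cw$ there; letting $\varepsilon\to 0$ we obtain $f(t)=c(t_{1}-t)^{\alpha-\beta}$ on $[t_{0},t_{1})$.

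The two cases of the statement then follow by imposing continuity of $f$ up to $t_{1}$. If $\beta>\alpha$, the exponent $\alpha-\beta$ is negative, so boundedness of $f$ near $t_{1}$ forces $c=0$, giving $f\equiv 0$, i.e. $(t_{1}-t)^{\beta-\alpha}f(t)=0$; this is case 1. If $0<\beta\le\alpha$, the function $(t_{1}-t)^{\alpha-\beta}$ is already continuous on $[t_{0},t_{1}]$, so $f(t)=c(t_{1}-t)^{\alpha-\beta}$ is admissible and, writing $k:=c\,\Gamma(\alpha)$, we recover the form in case 2. For the (easier) sufficiency direction I would substitute each form back: in case 1 the integrand vanishes identically, while in case 2 we have $g=cw$, so $\int_{t_{0}}^{t_{1}}g\psi\,dt=c\int_{t_{0}}^{t_{1}}w\psi\,dt=c\,\Gamma(\alpha)\,(I_{t_{0}+}^{\alpha}\psi)(t_{1})=c\,\Gamma(\alpha)\,h(t_{1})=0$.

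I expect the main obstacle to be the endpoint behaviour at $t=t_{1}$, where both $w$ and $g$ may be singular when $\alpha<1$ or $\beta<1$. This singularity is what prevents the naive choice $\psi=g-cw$ (the direct analogue of the classical Du Bois-Reymond variation) from being an admissible continuous variation, and it is precisely why I localize the fundamental lemma to subintervals $[t_{0},t_{1}-\varepsilon]$ and only afterwards recover the value at $t_{1}$ through the continuity of $f$. The other point requiring care is verifying the claimed bijection between admissible $h$ and constrained $\psi$ through Propositions \ref{pro21}--\ref{pro22}, so that no admissible variation is lost in the reformulation.
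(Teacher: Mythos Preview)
Your argument is correct and takes a genuinely different route from the paper. The paper follows the classical Du Bois--Reymond recipe of constructing a \emph{single} specific test function so that the integrand becomes a perfect square: in case $\beta>\alpha$ they take $({}^{c}D_{t_{0}+}^{\alpha}h)(t)=\Gamma(\alpha)(t_{1}-t)^{\beta-\alpha}f(t)-k_{0}$, and in case $\beta\le\alpha$ they take $({}^{c}D_{t_{0}+}^{\alpha}h)(t)=f(t)-\frac{k}{\Gamma(\alpha)}(t_{1}-t)^{\alpha-\beta}$, with the free constant tuned so that $h(t_{1})=0$; substituting back and subtracting the constraint yields $\int(t_{1}-t)^{\alpha-1}[\cdots]^{2}dt=0$, whence the bracket vanishes. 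The case split is forced on them early because the chosen $\psi$ must be continuous, and which power of $(t_{1}-t)$ achieves this depends on the sign of $\beta-\alpha$. Your projection/Lagrange-multiplier argument instead uses \emph{all} continuous test functions: after reducing to $\int(g-cw)\phi=0$ for every $\phi\in C$, you localise away from $t_{1}$ and invoke the ordinary fundamental lemma, postponing the case split to the final continuity discussion at $t=t_{1}$. This is cleaner and more conceptual, handles both regimes uniformly, and makes transparent why the naive choice $\psi=g-cw$ fails (it is exactly the paper's trick, but it is not continuous at $t_{1}$ unless one rescales as they do). The paper's approach, on the other hand, is fully self-contained and does not appeal to the classical lemma as a black box.
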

\begin{proof} {\bf Sufficiency}. From $f(t)=0,$ $t \in [t_{0}, t_{1}]$
the equality (\ref{equ1}) immediately follows. Let now conditions 2) be
satisfied. Then
$$
\int\limits_{t_{0}}^{t_{1}}(t_{1}-t)^{\beta-1}f(t)(^{c}D_{t_{0}+}^{\alpha}h)(t)dt=
\frac{k}{\Gamma(\alpha)}\int\limits_{t_{0}}^{t_{1}}(t_{1}-t)^{\beta-1}(t_{1}-t)^{\alpha-\beta}(^{c}D_{t_{0}+}^{\alpha}h)(t)dt
$$
$$
=\frac{k}{\Gamma(\alpha)}\int\limits_{t_{0}}^{t_{1}}(t_{1}-t)^{\alpha-1}(^{c}D_{t_{0}+}^{\alpha}h)(t)dt=k(h(t_{1})-h(t_{0}))=0.
$$

\textbf{Necessity.} Let $f \in C([t_{0}, \ t_{1}], \mathbb{R}),$
$0 < \alpha \leq 1,$ $\beta>0$ and equality (\ref{equ1}) is satisfied.
First consider the case $\beta > \alpha$. Let's put
$$
h(t)=\frac{1}{\Gamma(\alpha)}\int\limits_{t_{0}}^{t}(t-\tau)^{\alpha-1}
[\Gamma(\alpha)(t_{1}-\tau)^{\beta-\alpha}f(\tau)-k_{0}]d\tau,
\,\, t \in [t_{0}, \ t_{1}],
$$
where $k_{0}$ is a number.

The fractional Caputo derivative of the function $h$ has the form
$$
(^{c}D_{t_{0}+}^{\alpha}h)(t)=\Gamma(\alpha)(t_{1}-t)^{\beta-\alpha}f(t)-k_{0},
\,\, t \in [t_{0}, \ t_{1}].
$$

Therefore, the function $h$ belongs to the set $C^{\alpha}([t_{0},
\ t_{1}], \mathbb{R})$ and $h(t_{0})=0.$ If you choose $k_{0}$ in
the form:
$$
k_{0}=\frac{\Gamma(\alpha+1)}{(t_{1}-t_{0})^{\alpha}}\int\limits_{t_{0}}^{t_{1}}(t_{1}-t)^{\beta-1}f(t)dt,
$$
then it is obvious that
$$
h(t_{1})=\frac{1}{\Gamma(\alpha)}\int\limits_{t_{0}}^{t_{1}}(t_{1}-\tau)^{\alpha-1}
[\Gamma(\alpha)(t_{1}-\tau)^{\beta-\alpha}f(\tau)-k_{0}]d\tau=0.
$$

Thus, for a given $k_{0}$, the function $h$ belongs to the set
$C_{0}^{\alpha}([t_{0},t_{1}], \mathbb{R}).$ For the chosen $h$ we
write equality (\ref{equ1}) in the form
$$
\frac{1}{\Gamma(\alpha)}\int\limits_{t_{0}}^{t_{1}}(t_{1}-t)^{\alpha-1}
\Gamma(\alpha)(t_{1}-t)^{\beta-\alpha}f(t)[\Gamma(\alpha)(t_{1}-t)^{\beta-\alpha}f(t)
-k_{0}]dt=0.
$$

On the other hand
$$
\frac{k_{0}}{\Gamma(\alpha)}\int\limits_{t_{0}}^{t_{1}}(t_{1}-t)^{\alpha-1}
[\Gamma(\alpha)(t_{1}-t)^{\beta-\alpha}f(t) -k_{0}]dt=0.
$$

From the last two equalities we have
$$
\frac{1}{\Gamma(\alpha)}\int\limits_{t_{0}}^{t_{1}}(t_{1}-t)^{\alpha-1}
[\Gamma(\alpha)(t_{1}-t)^{\beta-\alpha}f(t) -k_{0}]^{2}dt=0.
$$

It follows that
$$
\Gamma(\alpha)(t_{1}-t)^{\beta-\alpha}f(t)=k_{0}, \,\,\, t \in
[t_{0}, \ t_{1}].
$$

Taking into account the condition $\beta> \alpha$, from this
equality for $t=t_{1}$ we obtain that $k_{0}=0$. Then for any $t
\in [t_{0}, \ t_{1}]$ we have $(t_{1}-t)^{\beta-\alpha}f(t)=0.$

Now consider the case $0 < \beta \leq \alpha \leq 1.$ For $h \in
C_{0}^{\alpha}([t_{0},{t_1}], \mathbb{R})$ we have $h(t_{0})=0$
and
\begin{equation}\label{equ2}
h(t_{1})=\frac{1}{\Gamma(\alpha)}\int\limits_{t_{0}}^{t_{1}}(t_{1}-t)^{\alpha-1}
(^{c}D^{\alpha}_{t_{0}+}h)(t)dt=0. 
\end{equation}%

Multiplying equality (\ref{equ2}) by $0 \neq k \in \mathbb{R}$ and
subtracting from equality (\ref{equ1}), we get
\begin{equation}\label{equ3}
\int\limits_{t_{0}}^{t_{1}}(t_{1}-t)^{\beta-1}\left[f(t)-\frac{k}{\Gamma(\alpha)}(t_{1}-t)^{\alpha-\beta}
\right](^{c}D^{\alpha}_{t_{0}+}h)(t)dt=0. 
\end{equation}%

Now we choose a constant number $k$ so that equation
\begin{equation}\label{equ4}
(^{c}D^{\alpha}_{t_{0}+}h)(t)=f(t)-\frac{k}{\Gamma(\alpha)}(t_{1}-t)^{\alpha-\beta},
\,\, t \in [t_{0}, t_{1}], 
\end{equation}%
in the space $C_{0}^{\alpha}([t_{0}, t_{1}], \mathbb{R})$ has a
unique solution.

Obviously, the solution to equation (\ref{equ4}), satisfying the condition
$h(t_{0})=0$, has the form
$$
h(t)=\frac{1}{\Gamma(\alpha)}\int\limits_{t_{0}}^{t}(t-\tau)^{\alpha-1}\left(f(\tau)-\frac{k}{\Gamma(\alpha)}(t_{1}-\tau)^{\alpha-\beta}
\right)d\tau.
$$

From the conditions
$$
h(t_{1})=\frac{1}{\Gamma(\alpha)}\int\limits_{t_{0}}^{t_{1}}(t_{1}-\tau)^{\alpha-1}\left(f(\tau)-\frac{k}{\Gamma(\alpha)}(t_{1}-\tau)^{\alpha-\beta}
\right)d\tau=0,
$$
we find
$$
k=\frac{(2\alpha-\beta)\Gamma(\alpha)}{(t_{1}-t_{0})^{2\alpha-\beta}}\int\limits_{t_{0}}^{t_{1}}(t_{1}-t)^{\alpha-1}f(t)dt.
$$

Taking into account (\ref{equ4}) of (\ref{equ3}) we have
$$
\int\limits_{t_{0}}^{t_{1}}(t_{1}-t)^{\beta-1}\left[f(t)-\frac{k}{\Gamma(\alpha)}(t_{1}-t)^{\alpha-\beta}
\right]^{2}dt=0.
$$
If follows that
$$
\qquad\qquad f(t)=\frac{k}{\Gamma(\alpha)}(t_{1}-t)^{\alpha-\beta}, \,\,\, t
\in [t_{0}, t_{1}], \;\mbox{if} \;\,\, 0< \beta \leq \alpha \leq 1.\qquad\qquad\quad\square
$$
\end{proof}

Now let's introduce the next function, which is very important for us in further research
$$
(Sa)(t)=\frac{(t_{1}-t)^{1-\beta}}{\Gamma(\alpha)}\int\limits_{t}^{t_{1}}(t_{1}-\tau)^{\beta-1}(\tau-t)^{\alpha-1}a(\tau)d\tau,
\,\,\, t \in [t_{0}, t_{1}],
$$
where $0< \alpha \leq 1$, $\beta >0$.

The following lemma is true.
\begin{lemma}\label{lem32}
Let $0 < \alpha \leq 1$ and $\beta > 0$ real
numbers. If $a \in C([t_{0}, t_{1}],\mathbb{R}),$ then $Sa \in
C([t_{0}, \ t_{1}], \mathbb{R}).$
\end{lemma}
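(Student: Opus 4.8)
The plan is to remove the two endpoint singularities of the kernel — the factor $(\tau-t)^{\alpha-1}$ blowing up at $\tau=t$ when $\alpha<1$ and the factor $(t_1-\tau)^{\beta-1}$ blowing up at $\tau=t_1$ when $\beta<1$ — together with the possibly unbounded prefactor $(t_1-t)^{1-\beta}$ at $t=t_1$, all at once by rescaling the moving interval $[t,t_1]$ to the fixed interval $[0,1]$. Concretely, for $t\in[t_0,t_1)$ I substitute $\tau=t+s(t_1-t)$, $s\in[0,1]$, so that $t_1-\tau=(1-s)(t_1-t)$, $\tau-t=s(t_1-t)$ and $d\tau=(t_1-t)\,ds$. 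A direct computation then collapses all the powers of $(t_1-t)$ and yields
$$(Sa)(t)=\frac{(t_1-t)^{\alpha}}{\Gamma(\alpha)}\int_0^1 s^{\alpha-1}(1-s)^{\beta-1}\,a\big(t+s(t_1-t)\big)\,ds.$$
Writing $G(t)$ for the integral on the right, the whole point of the substitution is that the remaining weight $s^{\alpha-1}(1-s)^{\beta-1}$ is now a fixed, $t$-independent function, integrable on $(0,1)$ precisely because $\alpha>0$ and $\beta>0$ (it is the Beta kernel), while all the $t$-dependence inside the integral sits only in the argument of the continuous function $a$.

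Next I would prove that $G$ is continuous on $[t_0,t_1]$ by dominated convergence. Fix $t^{*}\in[t_0,t_1]$ and any sequence $t_n\to t^{*}$. For each fixed $s\in(0,1)$ we have $t_n+s(t_1-t_n)\to t^{*}+s(t_1-t^{*})$, so by continuity of $a$ the integrands converge pointwise; moreover they are dominated, uniformly in $n$, by $\|a\|_{c}\,s^{\alpha-1}(1-s)^{\beta-1}$, which is integrable on $(0,1)$. Hence $G(t_n)\to G(t^{*})$, so $G\in C([t_0,t_1],\mathbb{R})$. Note in particular that at $t=t_1$ the argument of $a$ is identically $t_1$, whence $G(t_1)=a(t_1)\,B(\alpha,\beta)$ is finite.

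Finally, since $t\mapsto (t_1-t)^{\alpha}/\Gamma(\alpha)$ is continuous on $[t_0,t_1]$ — here $\alpha>0$ is essential, for it makes the prefactor tend to $0$ rather than blow up as $t\to t_1$ — the product $(Sa)(t)=\tfrac{(t_1-t)^{\alpha}}{\Gamma(\alpha)}G(t)$ is continuous on $[t_0,t_1)$, and at the right endpoint $(Sa)(t_1)=0$ with $(Sa)(t)\to 0$ as $t\to t_1^{-}$ because $G$ stays bounded near $t_1$. Thus $Sa\in C([t_0,t_1],\mathbb{R})$. The main difficulty is entirely concentrated in the first step: without the rescaling one must simultaneously control a kernel singular at both ends of a moving interval and a prefactor degenerating at $t_1$, and it is exactly the change of variables that converts this into a clean dominated-convergence argument against a fixed integrable weight.
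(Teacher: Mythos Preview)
Your proof is correct and considerably cleaner than the paper's. The paper proceeds by a case distinction on whether $\beta>\alpha$ or $0<\beta\le\alpha\le1$; in each case it estimates $|(Sa)(s_2)-(Sa)(s_1)|$ directly by bounding differences of the kernel $(t_1-\tau)^{\beta-1}(\tau-t)^{\alpha-1}$ (using monotonicity of auxiliary functions and, in the second case, splitting off a factor $\psi$), obtaining explicit H\"older-type bounds and then handling the endpoint $t=t_1$ separately via $|(Sa)(t)|\le\frac{M\Gamma(\beta)}{\Gamma(\alpha+\beta)}(t_1-t)^{\alpha}$. Your substitution $\tau=t+s(t_1-t)$ collapses all three troublesome factors at once into the single identity $(Sa)(t)=\frac{(t_1-t)^{\alpha}}{\Gamma(\alpha)}\int_0^1 s^{\alpha-1}(1-s)^{\beta-1}a(t+s(t_1-t))\,ds$, after which continuity is immediate by dominated convergence against the fixed Beta weight. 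The gain is that no case split on $\alpha,\beta$ is needed, the endpoint behaviour at $t_1$ comes for free from the factor $(t_1-t)^{\alpha}$, and the argument in fact works for any $\alpha>0$, not only $0<\alpha\le1$. The paper's approach, on the other hand, yields quantitative moduli of continuity, which your argument does not provide directly.
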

\begin{proof} First consider the case $\beta> \alpha$. Let
$M=\max\limits_{t \in [t_{0}, t_{1}]}|a(t)|$ and $t_{0} \leq
s_{1}<s_{2}< t_{1}$. Then we have
$$
(Sa)(s_{2})-(Sa)(s_{1})=\frac{1}{\Gamma(\alpha)}\int\limits_{s_{2}}^{t_{1}}(t_{1}-\tau)^{\beta-1}[(t_{1}-s_{2})^{1-\beta}
(\tau-s_{2})^{\alpha-1}
$$
$$
-(t_{1}-s_{1})^{1-\beta}(\tau-s_{1})^{\alpha-1}]a(\tau)d\tau-\frac{(t_{1}-s_{1})^{1-\beta}}{\Gamma(\alpha)}\int\limits_{s_{1}}^{s_{2}}(t_{1}-\tau)^{\beta-1}(\tau-s_{1})^{\alpha-1}
a(\tau)d\tau.
$$

It is obvious that for $\beta > \alpha$ the function
$l(s)=(t_{1}-s)^{1-\beta}(\tau-s)^{\alpha-1},$ $t_{0}< s< \tau
\leq t_{1},$ increases monotonically. Therefore
$$
(t_{1}-s_{2})^{1-\beta}(\tau-s_{2})^{\alpha-1}-(t_{1}-s_{1})^{1-\beta}(\tau-s_{1})^{\alpha-1}\geq
0, \,\,\, t_{0}\leq s_{1}< s_{2}< \tau \leq t_{1}.
$$
Then we get
$$
|(Sa)(s_{2})-(Sa)(s_{1})|\leq M[(S1)(s_{2})-(S1)(s_{1})]
$$
$$
+2M\frac{(t_{1}-s_{1})^{1-\beta}}{\Gamma(\alpha)}\int\limits_{s_{1}}^{s_{2}}(t_{1}-\tau)^{\beta-1}(\tau-s_{1})^{\alpha-1}
d\tau
$$
$$
=\frac{M\Gamma(\beta)}{\Gamma(\alpha+\beta)}[(t_{1}-s_{2})^{\alpha}-(t_{1}-s_{1})^{\alpha}]
+2M\frac{(t_{1}-s_{1})^{1-\beta}}{\Gamma(\alpha)} \int
\limits_{s_{1}}^{s_{2}}(t_{1}-\tau)^{\beta-1}(\tau-s_{1})^{\alpha-1}d\tau
$$
$$
\leq 2M\frac{(t_{1}-s_{1})^{1-\beta}}{\Gamma(\alpha)}\int
\limits_{s_{1}}^{s_{2}}(t_{1}-\tau)^{\beta-1}(\tau-s_{1})^{\alpha-1}d\tau.
$$

Consider the following cases:

(i) If $\alpha< \beta \leq 1,$ then
$$
|(Sa)(s_{2})-(Sa)(s_{1})|\leq
\frac{2M}{\Gamma(\alpha+1)}(t_{1}-s_{1})^{1-\beta}(t_{1}-s_{2})^{\beta-1}
(s_{2}-s_{1})^{\alpha}.
$$

(ii) If $\beta>1$, then
$$
|(Sa)(s_{2})-(Sa)(s_{1})|\leq \frac{2M}{\Gamma(\alpha+1)}
(s_{2}-s_{1})^{\alpha}.
$$

Therefore the function $Sa$ is continuous on $[t_{0}, t_{1}).$ On
the other hand, for any $t \in [t_{0}, t_{1})$ we have
$$
|(Sa)(t)|\leq
\frac{M(t_{1}-t)^{1-\beta}}{\Gamma(\alpha)}\int\limits_{t}^{t_{1}}(t_{1}-\tau)^{\beta-1}
(\tau-t)^{\alpha-1}d\tau=\frac{M\Gamma(\beta)}{\Gamma(\alpha+\beta)}
(t_{1}-t)^{\alpha}.
$$

Thus $\lim\limits_{t\rightarrow t_{1}}(Sa)(t)=0.$ Consequently,
$(Sa)(t)$ can be continuously extended by $0$ in $t=t_{1}$.
Finally, if $\beta> \alpha,$ then for any $a \in C ([t_{0},
t_{1}], \mathbb{R})$ we have $Sa \in C([t_{0}, t_{1}],
\mathbb{R}).$

Now consider the case $0< \beta \leq \alpha \leq 1.$ Let's denote
$$
(Sa)(t)=(t_{1}-t)^{1-\beta}\psi(t),
$$
where
$$
\psi(t)=\frac{1}{\Gamma(\alpha)}\int\limits_{t}^{t_{1}}(t_{1}-\tau)^{\beta-1}(\tau-t)^{\alpha-1}a(\tau)d\tau.
$$

Then for any $s_{1},\ s_{2}$ $(t_{0}\leq s_{1}<s_{2}<t_{1})$, we
have
$$
\psi(s_{2})-\psi(s_{1})=\frac{1}{\Gamma(\alpha)}\int\limits_{s_{2}}^{t_{1}}
(t_{1}-\tau)^{\beta-1}((\tau-s_{2})^{\alpha-1}-(\tau-s_{1})^{\alpha-1})a(\tau)d\tau
$$
$$
-\frac{1}{\Gamma(\alpha)}\int\limits_{s_{1}}^{s_{2}}
(t_{1}-\tau)^{\beta-1}(\tau-s_{1})^{\alpha-1}a(\tau)d\tau.
$$

It is obvious that
$(\tau-s_{2})^{\alpha-1}-(\tau-s_{1})^{\alpha-1}\geq 0,$
$t_{0}\leq s_{1}<s_{2}< \tau \leq t_{1}.$ Therefore
$$
|\psi(s_{2})-\psi(s_{1})|\leq
\frac{M}{\Gamma(\alpha)}\int\limits_{s_{2}}^{t_{1}}(t_{1}-\tau)^{\beta-1}(\tau-s_{2})^{\alpha-1}d\tau-
\frac{M}{\Gamma(\alpha)}\int\limits_{s_{1}}^{t_{1}}(t_{1}-\tau)^{\beta-1}(\tau-s_{1})^{\alpha-1}d\tau
$$
$$
+\frac{2M}{\Gamma(\alpha)}\int\limits_{s_{1}}^{s_{2}}(t_{1}-\tau)^{\beta-1}(\tau-s_{1})^{\alpha-1}d\tau
\leq
\frac{M\Gamma(\beta)}{\Gamma(\alpha+\beta)}((t_{1}-s_{2})^{\alpha+\beta-1}-(t_{1}-s_{1})^{\alpha+\beta-1})
$$
$$
+\frac{2M}{\Gamma(\alpha+1)}(t_{1}-s_{2})^{\beta-1}(s_{2}-s_{1})^{\alpha}.
$$

If $\alpha+\beta-1 \geq 0,$ then
$(t_{1}-s_{2})^{\alpha+\beta-1}-(t_{1}-s_{1})^{\alpha+\beta-1}\leq
0.$ In this case
\begin{equation}\label{equ5}
|\psi(s_{2})-\psi(s_{1})|\leq
\frac{2M}{\Gamma(\alpha+1)}(t_{1}-s_{2})^{\beta-1}(s_{2}-s_{1})^{\alpha}.
\end{equation}%

If $\alpha+\beta-1<0,$ then
$$
(t_{1}-s_{2})^{\alpha+\beta-1}-(t_{1}-s_{1})^{\alpha+\beta-1}\leq
(t_{1}-s_{2})^{\alpha+\beta-1}(t_{1}-s_{1})^{\alpha+\beta-1}(s_{2}-s_{1})^{1-(\alpha+\beta)}.
$$

In this case
$$
|\psi(s_{2})-\psi(s_{1})|\leq
\frac{M\Gamma(\beta)}{\Gamma(\alpha+\beta)}(t_{1}-s_{2})^{\alpha+\beta-1}(t_{1}-s_{1})^{\alpha+\beta-1}
(s_{2}-s_{1})^{1-(\alpha+\beta)}
$$
\begin{equation}\label{equ6}
+\frac{2M}{\Gamma(\alpha+1)}(t_{1}-s_{2})^{\beta-1}(s_{2}-s_{1})^{\alpha}.
\end{equation}%

From inequalities (\ref{equ5}) and (\ref{equ6}) it follows that the function $\psi$
is continuous on the segment $[t_{0}, \ t_{1}].$
Therefore, the function $Sa$, as the product of two continuous
functions, is continuous. \qquad\qquad\qquad\qquad\qquad\qquad\qquad\qquad\qquad\qquad\quad$\square$
\end{proof}
\begin{lemma}\label{lem33}
Suppose that $0< \alpha \leq 1,\beta>0$ are
real numbers and $a_{0},\ a_{1} $  $ \in C([t_{0}, t_{1}],
\mathbb{R}^{n}).$ Then to fulfill equality
\begin{equation}\label{equ7}
\int\limits_{t_{0}}^{t_{1}}(t_{1}-t)^{\beta-1}\left[\langle
a_{0}(t),h(t)\rangle+\langle
a_{1}(t),(^{c}D_{t_{0}+}^{\alpha}h)(t)\rangle \right]dt=0, \,\,
\forall h \in C_{0}^{\alpha}([t_{0}, t_{1}], \mathbb{R}^{n}),
\end{equation}%
it is necessary and sufficient that

1)
$(t_{1}-t)^{1-\alpha}(I_{t_{1}-}^{\alpha}b)(t)+(t_{1}-t)^{\beta-\alpha}a_{1}(t)=0$
on $t \in [t_{0}, t_{1}],$ if $\beta>\alpha>0$,

2)
$(t_{1}-t)^{1-\beta}(I_{t_{1}-}^{\alpha}b)(t)+a_{1}(t)=\frac{k}{\Gamma(\alpha)}(t_{1}-t)^{\alpha-\beta}$
on $t \in [t_{0}, t_{1}],$ if $0<\beta \leq \alpha \leq 1$, for
some constant $0\neq k \in \mathbb{R}^{n},$ where
$b(t)=(t_{1}-t)^{\beta-1}a_{0}(t).$
\end{lemma}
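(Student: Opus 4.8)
The plan is to reduce the vector identity (\ref{equ7}) to the scalar Du Bois--Reymond Lemma \ref{lem31}, applied componentwise, after absorbing the ``position'' term $\langle a_0,h\rangle$ into the ``derivative'' term by means of a fractional integration by parts. First I would use that every $h\in C_0^{\alpha}([t_0,t_1],\mathbb{R}^n)$ satisfies $h(t_0)=0$, so by Proposition \ref{pro21} it admits the representation $h=I_{t_0+}^{\alpha}g$ with $g:=({}^{c}D_{t_{0}+}^{\alpha}h)\in C([t_0,t_1],\mathbb{R}^n)$. With the abbreviation $b(t)=(t_1-t)^{\beta-1}a_0(t)$ from the statement, the first summand in (\ref{equ7}) becomes $\int_{t_0}^{t_1}\langle b(t),(I_{t_0+}^{\alpha}g)(t)\rangle\,dt$.

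The key computation is the fractional integration-by-parts identity $\int_{t_0}^{t_1}\langle b,I_{t_0+}^{\alpha}g\rangle\,dt=\int_{t_0}^{t_1}\langle I_{t_1-}^{\alpha}b,g\rangle\,dt$, which I would obtain by interchanging the order of integration over the triangle $t_0\le\tau\le t\le t_1$; the interchange is legitimate because $|b(t)|\le C(t_1-t)^{\beta-1}$ with $\beta>0$ and $g$ is bounded, so the double integral converges absolutely. Substituting this back and recombining with the $a_1$-term, the left-hand side of (\ref{equ7}) equals $\int_{t_0}^{t_1}(t_1-t)^{\beta-1}\langle F(t),({}^{c}D_{t_{0}+}^{\alpha}h)(t)\rangle\,dt$, where $F(t):=(t_1-t)^{1-\beta}(I_{t_1-}^{\alpha}b)(t)+a_1(t)$. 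This is precisely where Lemma \ref{lem32} is indispensable: a direct substitution of $b(\tau)=(t_1-\tau)^{\beta-1}a_0(\tau)$ shows $(t_1-t)^{1-\beta}(I_{t_1-}^{\alpha}b)(t)=(Sa_0)(t)$, whence $F=Sa_0+a_1\in C([t_0,t_1],\mathbb{R}^n)$. The continuity of $F$ is exactly the hypothesis required to invoke the scalar lemma.

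Choosing $h$ with only its $i$-th coordinate nonzero reduces the identity to the scalar equation of Lemma \ref{lem31} with $f=F_i$, for each $i=1,\dots,n$. Applying that lemma componentwise yields $(t_1-t)^{\beta-\alpha}F(t)=0$ if $\beta>\alpha$, and $F(t)=\frac{k}{\Gamma(\alpha)}(t_1-t)^{\alpha-\beta}$ for a constant vector $k$ (pooling the componentwise constants) if $0<\beta\le\alpha\le1$. Re-substituting $(t_1-t)^{1-\beta}(I_{t_1-}^{\alpha}b)=Sa_0$, the second relation is already condition 2), while multiplying $(t_1-t)^{\beta-\alpha}F=0$ out gives $(t_1-t)^{1-\alpha}(I_{t_1-}^{\alpha}b)(t)+(t_1-t)^{\beta-\alpha}a_1(t)=0$, which is condition 1).

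Sufficiency runs the same chain in reverse and is immediate: in case 1) the hypothesis forces $F\equiv0$ on $[t_0,t_1)$, hence $(t_1-t)^{\beta-1}F\equiv0$ almost everywhere and the integral vanishes; in case 2) one gets $\frac{1}{\Gamma(\alpha)}\int_{t_0}^{t_1}(t_1-t)^{\alpha-1}\langle k,({}^{c}D_{t_{0}+}^{\alpha}h)(t)\rangle\,dt=\langle k,(I_{t_0+}^{\alpha}\,{}^{c}D_{t_{0}+}^{\alpha}h)(t_1)\rangle=\langle k,h(t_1)-h(t_0)\rangle=0$ by Proposition \ref{pro21}. I expect the only delicate points to be the rigorous justification of the fractional integration by parts (handled by the Fubini estimate above) and the identification $(t_1-t)^{1-\beta}I_{t_1-}^{\alpha}b=Sa_0$ together with its continuity, which is the content of Lemma \ref{lem32}; once these are in place the result follows mechanically from the scalar lemma.
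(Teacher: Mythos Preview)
Your proposal is correct and follows essentially the same route as the paper: represent $h$ via $I_{t_0+}^{\alpha}$ of its Caputo derivative, swap the order of integration (the paper performs this step inline as the transformation in (\ref{equ8}), which is exactly your fractional integration-by-parts identity), identify the resulting continuous coefficient as $F=Sa_0+a_1$ via Lemma~\ref{lem32}, and then invoke the scalar Lemma~\ref{lem31}. The only cosmetic difference is that the paper tests against $h=\varphi\,r$ with $\varphi\in C_0^{\alpha}([t_0,t_1],\mathbb{R})$ and arbitrary $r\in\mathbb{R}^n$, while you restrict coordinate by coordinate; your treatment is slightly more explicit about the Fubini justification and the sufficiency direction, which the paper leaves to Lemma~\ref{lem31}.
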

\begin{proof} For $h \in C_{0}^{\alpha}([t_{0}, t_{1}],
\mathbb{R}^{n})$ we put $h(t)=\varphi(t)r,$ where $\varphi \in
C_{0}^{\alpha}([t_{0}, t_{1}], \mathbb{R})$ and $r \in
\mathbb{R}^{n}.$ Given the representation
$$
\varphi(t)=\frac{1}{\Gamma(\alpha)}
\int\limits_{t_{0}}^{t}(t-\tau)^{\alpha-1}(^{c}D^{\alpha}_{t_{0}+}\varphi)(\tau)d\tau,
$$
we transform (\ref{equ7}) as follows:
$$
\int\limits_{t_{0}}^{t_{1}}(t_{1}-t)^{\beta-1}\left[\langle
a_{0}(t), r \rangle \frac{1}{\Gamma(\alpha)}
\int\limits_{t_{0}}^{t}(t-\tau)^{\alpha-1}(^{c}D_{t_{0}+}^{\alpha}\varphi)(\tau)d\tau
+\langle a_{1}(t), r
\rangle(^{c}D_{t_{0}+}^{\alpha}\varphi)(t)\right]dt
$$
\begin{eqnarray}\label{equ8}
&=\int\limits_{t_{0}}^{t_{1}}(t_{1}-t)^{\beta-1}
\left\langle\frac{(t_{1}-t)^{1-\beta}}{\Gamma(\alpha)}\int\limits_{t}^{t_{1}}(t_{1}-\tau)^{\beta-1}
(\tau-t)^{\alpha-1}a_{0}(\tau)d\tau +a_{1}(t), r
\right\rangle\nonumber\\
&\times(^{c}D_{t_{0}}^{\alpha}\varphi)(t)dt=0.
\end{eqnarray}%

By Lemma \ref{lem32}, the function
$f(t)=(t_{1}-t)^{1-\beta}(I_{t_{1}-}^{\alpha}b)(t)+a_{1}(t)$ is
continuous on the interval $[t_{0}, t_{1}].$ Then by virtue of
Lemma \ref{lem31} we have

1) $\langle
(t_{1}-t)^{1-\alpha}(I_{t_{1}-}^{\alpha}b)(t)+(t_{1}-t)^{\beta-\alpha}a_{1}(t),
r \rangle=0,$ if $\beta > \alpha>0,$

2) $\langle (t_{1}-t)^{1-\beta}(I_{t_{1}-}^{\alpha}b)(t)+a_{1}(t),
r \rangle=\frac{k_{0}}{\Gamma(\alpha)}(t_{1}-t)^{\alpha-\beta},$
if $0<\beta \leq \alpha \leq 1,$ where $k_{0}$ is a constant.

Since $r \in \mathbb{R}^{n}$ is arbitrary, the assertions of Lemma
\ref{lem33} follow from Lemma \ref{lem31}. \qquad\qquad\qquad\qquad\qquad\qquad\qquad\qquad\qquad\qquad\qquad\qquad\qquad\qquad\qquad\quad$\square$
\end{proof}

\section{Necessary conditions of Euler-Lagrange and Legendre for
fractional variational problem}\label{sec4}

\subsection {The simplest problem of fractional calculus of variations}

 For $0<\alpha \leq 1$ and $\beta>0$ we call the simplest problem
 of fractional calculus of variations the following extremal
 problem in the space $C^{\alpha}([t_{0}, t_{1}], \mathbb{R}^{n})$:
$$
(P)\;\;J(x(\cdot))=\int\limits_{t_{0}}^{t_{1}}(t_{1}-t)^{\beta-1}L(t,
x(t), (^{c}D_{t_{0}+}^{\alpha}x)(t))dt \rightarrow extr, \\
x(t_{0})=x_{0}, \,x(t_{1})=x_{1}.
$$

Here the segment $[t_{0}, t_{1}]$ is assumed to be fixed and
finite, $t_{0}<t_{1}. L$ $=L(t, x, y)$ is a function $2n+1$
variables, called the integrand. The extremum in problem (P) is
considered among the space functions $C^{\alpha}([t_{0},
t_{1}],\mathbb{R}^{n})$ satisfying the conditions at the ends or
boundary conditions: $x(t_{0})=x_{0}$, $x(t_{1})=x_{1}.$ Such
functions are called admissible.
\begin{definition}\label{def41}
We will say that an admissible function
$x^{0}$ deliver a weak local minimum in problem (P), and write
$x^{0} \in wlocmin P$ if there exists $\delta>0$ such that
$J(x(\cdot))\geq J(x^{0}(\cdot))$ for any admissible function $x$
for which
$$
\|x(\cdot)-x^{0}(\cdot)\|_{C^{\alpha}([t_{0},t_{1}],
\mathbb{R}^{n})}< \delta,
$$
where
$$
\|y\|_{C^{\alpha}([t_{0},t_{1}], \mathbb{R}^{n})}=\max\limits_{t
\in [t_{0}, t_{1}]}\|y(t)\|+\max\limits_{t \in [t_{0},
t_{1}]}\|(^{c}D_{t_{0}+}^{\alpha}y)(t)\|.
$$
\end{definition}
\begin{theorem}\label{teo41}
Let $0< \alpha \leq 1,$ $\beta>0$ and the
function $x^{0}$ deliver a weak local minimum in the problem (P)
$(x^{0} \in wlocmin P),$ the functions $L, \ L_{x}, \ L_{y}$ are
continuous in some neighborhood of the graph $\{(t, x^{0}(t),
(^{c}D_{t_{0}+}^{\alpha}x^{0})(t))|t \in [t_{0}, t_{1}]\}.$ Then\\
1) if $0< \alpha < \beta$, then the function $x^{0}$ is a solution
to the equation
\begin{equation}\label{equ9}
(t_{1}-t)^{1-\alpha}(I_{t_{1}-}^{\alpha}b)(t)+(t_{1}-t)^{\beta-\alpha}L_{y}(t,
x^{0}(t),(^{c}D_{t_{0}+}^{\alpha}x^{0})(t))=0,\; t\in[t_{0}, t_{1}]
\end{equation}%
2) if $0<\beta \leq \alpha \leq 1,$ then function $x^{0}$ is a
solution to the equation
\begin{equation}\label{equ10}
(t_{1}-t)^{1-\beta}(I_{t_{1}-}^{\alpha}b)(t)+L_{y}(t,
x^{0}(t),(^{c}D_{t_{0}+}^{\alpha}x^{0})(t))=\frac{k}{\Gamma(\alpha)}(t_{1}-t)^{\alpha-\beta},\; t\in[t_{0}, t_{1}]
\end{equation}%
for some
constant $0\neq k \in \mathbb{R}^{n},$  where
$b(t)=(t_{1}-t)^{\beta-1}L_{x}(t, x^{0}(t),
(^{c}D_{t_{0}+}^{\alpha}x^{0})(t))$.
\end{theorem}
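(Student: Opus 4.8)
The plan is to follow the classical first-variation scheme, adapted to the fractional setting, and to reduce the whole argument to the generalized Du Bois-Reymond lemma (Lemma~\ref{lem33}). First I would fix an arbitrary admissible variation $h \in C_{0}^{\alpha}([t_{0}, t_{1}], \mathbb{R}^{n})$. Because $h(t_{0})=h(t_{1})=0$, the perturbed function $x^{0}+\lambda h$ satisfies the same boundary conditions $x(t_{0})=x_{0}$, $x(t_{1})=x_{1}$ and is therefore admissible for every $\lambda \in \mathbb{R}$; moreover, by linearity of the Caputo derivative, $(^{c}D_{t_{0}+}^{\alpha}(x^{0}+\lambda h))(t)=(^{c}D_{t_{0}+}^{\alpha}x^{0})(t)+\lambda(^{c}D_{t_{0}+}^{\alpha}h)(t)$. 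For $|\lambda|$ sufficiently small we have $\|x^{0}+\lambda h-x^{0}\|_{C^{\alpha}([t_{0},t_{1}],\mathbb{R}^{n})}=|\lambda|\,\|h\|_{C^{\alpha}([t_{0},t_{1}],\mathbb{R}^{n})}<\delta$, so that the scalar function $\phi(\lambda):=J(x^{0}+\lambda h)$ attains a local minimum at $\lambda=0$. Once its differentiability is established, this forces $\phi'(0)=0$.

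The second step is to differentiate $\phi$ under the integral sign and identify $\phi'(0)$ with the first variation. Since $h$ and $^{c}D_{t_{0}+}^{\alpha}h$ are continuous, hence bounded, and since $L,L_{x},L_{y}$ are continuous on an open neighborhood of the compact graph $\{(t,x^{0}(t),(^{c}D_{t_{0}+}^{\alpha}x^{0})(t))\,|\,t\in[t_{0},t_{1}]\}$, the perturbed graph stays inside this neighborhood for all small $\lambda$, and the $\lambda$-derivative of the integrand is bounded uniformly in $\lambda$ by a constant multiple of the weight $(t_{1}-t)^{\beta-1}$. As $\beta>0$, this weight is integrable on $[t_{0},t_{1}]$, so differentiation under the integral is legitimate and yields
$$
\phi'(0)=\int\limits_{t_{0}}^{t_{1}}(t_{1}-t)^{\beta-1}\left[\langle L_{x},h(t)\rangle+\langle L_{y},(^{c}D_{t_{0}+}^{\alpha}h)(t)\rangle\right]dt=0,
$$
where $L_{x}$ and $L_{y}$ are evaluated along $(t,x^{0}(t),(^{c}D_{t_{0}+}^{\alpha}x^{0})(t))$.

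Finally, I would set $a_{0}(t)=L_{x}(t,x^{0}(t),(^{c}D_{t_{0}+}^{\alpha}x^{0})(t))$ and $a_{1}(t)=L_{y}(t,x^{0}(t),(^{c}D_{t_{0}+}^{\alpha}x^{0})(t))$; both are continuous by hypothesis. The identity above is then exactly relation (\ref{equ7}) of Lemma~\ref{lem33}, valid for every $h\in C_{0}^{\alpha}([t_{0},t_{1}],\mathbb{R}^{n})$. Applying that lemma directly gives the two cases of the statement: for $\beta>\alpha>0$ the relation $(t_{1}-t)^{1-\alpha}(I_{t_{1}-}^{\alpha}b)(t)+(t_{1}-t)^{\beta-\alpha}a_{1}(t)=0$, which is (\ref{equ9}); and for $0<\beta\leq\alpha\leq1$ the relation with the constant $0\neq k\in\mathbb{R}^{n}$, which is (\ref{equ10}), where $b(t)=(t_{1}-t)^{\beta-1}a_{0}(t)$ as required.

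I expect the only delicate point to be the justification of differentiation under the integral sign near the endpoint $t=t_{1}$, where the weight $(t_{1}-t)^{\beta-1}$ is singular when $0<\beta<1$. The key observation that resolves it is that the boundedness of $h$, $^{c}D_{t_{0}+}^{\alpha}h$, $L_{x}$ and $L_{y}$ confines the singularity entirely to the integrable factor $(t_{1}-t)^{\beta-1}$, so a dominated-convergence estimate applies uniformly for small $\lambda$ and the differentiation is valid.
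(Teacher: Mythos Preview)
Your proposal is correct and follows essentially the same approach as the paper: perturb along $x^{0}+\lambda h$ with $h\in C_{0}^{\alpha}$, differentiate $\phi(\lambda)=J(x^{0}+\lambda h)$ at $\lambda=0$, and apply Lemma~\ref{lem33} with $a_{0}=L_{x}$, $a_{1}=L_{y}$. Your explicit justification of differentiation under the integral sign via the integrable majorant $(t_{1}-t)^{\beta-1}$ is more detailed than the paper's brief appeal to smoothness, but the structure is identical.
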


\begin{proof} Let's take an arbitrary but fixed function $h \in
C_{0}^{\alpha}([t_{0}, t_{1}], \mathbb{R}^{n})$. Since $x^{0} \in
wlocmin P$ then the function of one variable
\begin{eqnarray*}
&\phi(\lambda)=J(x^{0}(\cdot)+\lambda
h(\cdot))\\
&=\int\limits_{t_{0}}^{t_{1}}(t_{1}-t)^{\beta-1}L(t,
x^{0}(t)+\lambda h(t),
(^{c}D_{t_{0}+}^{\alpha}x^{0})(t)+\lambda(^{c}D_{t_{0}+}^{\alpha}h)(t))dt
\end{eqnarray*}
has an extremum at $\lambda=0$. From the smoothness conditions
imposed on $L, \, x^{0}, \ h,$ it follows that the function $\phi$
is differentiable at zero. But then, according to Fermat's
theorem, $\phi'(0)=0.$ Differentiating the function $\phi$ and
assuming $\lambda=0$ we obtain

$$
\phi'(0)=\int\limits_{t_{0}}^{t_{1}}(t_{1}-t)^{\beta-1} [\langle
L_{x}(t, x^{0}(t), (^{c}D_{t_{0}+}^{\alpha}x^{0})(t)), h(t)
\rangle
$$
$$
+\langle L_{y}(t, x^{0}(t), (^{c}D_{t_{0}+}^{\alpha}x^{0})(t)),
(^{c}D_{t_{0}+}^{\alpha}h)(t)\rangle]dt=0,\,\,\, h \in
C_{0}^{\alpha}([t_{0}, t_{1}], \mathbb{R}^{n}).
$$

If we denote $a_{0}(t)=L_{x}(t, x^{0}(t),
(^{c}D_{t_{0}+}^{\alpha}x^{0})(t))$ and $a_{1}(t)=L_{y}(t,
x^{0}(t),$ $(^{c}D_{t_{0}+}^{\alpha}x^{0})(t))$ here, then
equalities (\ref{equ9}) and (\ref{equ10}) follow from Lemma \ref{lem33}. \qquad$\square$
\end{proof}
\begin{example}\label{exam41}
For $0<\alpha \leq 1$ and $\beta>0$ we
consider the following extremal problem in the space
$C^{\alpha}([0, 1], \mathbb{R}):$
\begin{equation}\label{equ11}
J(x(\cdot))=\int\limits_{0}^{1}(1-t)^{\beta-1}((^{c}D_{0+}^{\alpha}x)(t))^{2}dt \rightarrow extr,\; x(0)=0, \,x(1)=1.
\end{equation}%

The methods of classical calculus of variations are not applicable
here since the integrand $(1-t)^{\beta-1}y^{2},$ for $\beta<1$ is
not continuous at $t=1$. To solve the problem we use Theorem \ref{teo41}.

Let $0<\alpha \leq 1$ and $\beta> \alpha > 0$, then by condition
(\ref{equ9}) we have
$$
2(^{c}D_{0+}^{\alpha}x)(t)=0, \,\,\,\,\, t \in [0, 1].
$$

Hence we get $x(t)=x(0)=0, \,\,\,\, t \in [0, 1].$ Therefore,
problem (\ref{equ11}) does not have solution.

Now consider the case $0<\beta \leq \alpha \leq 1$. In this case
equation (\ref{equ10}) takes the form
$$
2(^{c}D_{0+}^{\alpha}x)(t)=\frac{k}{\Gamma(\alpha)}(1-t)^{\alpha-\beta},
\,\,\,\,\, t \in [0, 1].
$$

The solution to this equation is
$$
x(t)=x(0)+\frac{k}{2\Gamma^{2}(\alpha)}\int\limits_{0}^{t}(t-\tau)^{\alpha-1}(1-\tau)^{\alpha-\beta}d\tau,
\,\,\, t \in [0, 1].
$$

Now with the boundary conditions we determine
$$
k=2(2\alpha-\beta)\Gamma^{2}(\alpha).
$$

Thus, the extremal has the form
$$
x(t)=(2\alpha-\beta)\int\limits_{0}^{t}(t-\tau)^{\alpha-1}(1-\tau)^{\alpha-\beta}d\tau,
\,\, t \in [0, \ 1].
$$

Hence, for $\beta=\alpha <1,$ as in work \cite{14}, we obtain
$x(t)=t^{\alpha}.$ And if $\beta=\alpha=1,$ then, as in the
classical calculus of variations, we obtain $x(t)=t,$ $t \in [0, \
1].$

Now, for the function$L(y)=y^{2},$ using the inequality
$$
L(z)-L(x)\geq 2x(z-x), \,\,\, \forall x, z \in R,
$$
we show that function
$x(t)=(2\alpha-\beta)\int\limits_{0}^{t}(t-\tau)^{\alpha-1}(1-\tau)^{\alpha-\beta}d\tau$
is indeed a solution to problem (\ref{equ11}). Then for $z \in
C^{\alpha}([0, 1], \mathbb{R})$ and
$x(t)=(2\alpha-\beta)\int\limits_{0}^{t}(t-\tau)^{\alpha-1}(1-\tau)^{\alpha-\beta}d\tau,$
we get
$$
J(z(\cdot))-J(x(\cdot))=\int\limits_{0}^{1}(1-t)^{\beta-1}[((^{c}D_{0+}^{\alpha}z)(t))^{2}-((^{c}D_{0+}^{\alpha}x)(t))^{2}]dt
$$
$$
\geq
2\int\limits_{0}^{1}(1-t)^{\beta-1}(^{c}D_{0+}^{\alpha}x)(t)[(^{c}D_{0+}^{\alpha}z)(t)-(^{c}D_{0+}^{\alpha}x)(t)]dt
$$
$$
=\int\limits_{0}^{1}(1-t)^{\beta-1}\frac{k}{\Gamma(\alpha)}(1-t)^{\alpha-\beta}[(^{c}D_{0+}^{\alpha}z)(t)-(^{c}D_{0+}^{\alpha}x)(t)]dt
$$
$$
=\frac{k}{\Gamma(\alpha)}\int\limits_{0}^{1}(1-t)^{\alpha-1}(^{c}D_{0+}^{\alpha}(z-x))(t)dt=k[(z(1)-x(1))-(z(0)-x(0))]=0.
$$

Thus, the function
$x(t)=(2\alpha-\beta)\int\limits_{0}^{t}(t-\tau)^{\alpha-1}(1-\tau)^{\alpha-\beta}d\tau$
is indeed a solution to problem (\ref{equ11}).

We will consider problem (P) and derive Legendre's conditions
using the second variation of the functional. We assume that $L$
is a continuous function with continuous partial derivatives
$L_{x}, \, L_{y}, \, L_{xx}, \,L_{xy}, \,L_{yy}$. The solution to
problem (P) is sought in the function space $PC^{\alpha}([t_{0},
t_{1}], \mathbb{R}^{n}).$ Let the function $x^{0}\in
PC^{\alpha}([t_{0}, t_{1}], \mathbb{R}^{n})$ deliver a weak local
minimum to the functional $J(\cdot).$ Then standard calculations
show that for any (fixed) variation $h \in PC_{0}^{\alpha}([t_{0},
t_{1}], \mathbb{R}^{n}),$ the condition
\begin{eqnarray}\label{equ12}
&\delta^{2}J(x^{0}(\cdot),
h(\cdot))=\frac{d^{2}}{d\lambda^{2}}J(x^{0}+\lambda
h)|_{\lambda=0}= \int\limits_{t_{0}}^{t_{1}}(t_{1}-t)^{\beta-1}
[\langle P(t)(^{c}D_{t_{0}+}^{\alpha}h)(t),
\nonumber\\
&(^{c}D_{t_{0}+}^{\alpha}h)(t)\rangle +2\langle Q(t)h(t), (^{c}D_{t_{0}+}^{\alpha}h)(t)\rangle+\langle
R(t)h(t), h(t)\rangle]dt\geq 0,
\end{eqnarray}%
where \, $R(t)=L_{xx}(t, x^{0}(t),
(^{c}D_{t_{0}+}^{\alpha}x^{0})(t)),$\, $Q(t)=L_{xy}(t,
x^{0}(t)(^{c}D_{t_{0}+}^{\alpha}x^{0})(t)),$\, $P(t)=L_{yy}(t,
x^{0}(t),$ $(^{c}D_{t_{0}+}^{\alpha}x^{0})(t)).$

Let $T_{1}\subset (t_{0}, t_{1})$ is the set of continuity point
of the function $^{c}D_{t_{0}+}^{\alpha}x^{0}.$
\end{example}
\begin{theorem}\label{teo42}
Let $0< \alpha \leq1,$ $\beta>0$ and the
function $x^{0}\in PC^{\alpha}([t_{0}, t_{1}], \mathbb{R}^{n})$
deliver a weak local minimum in the problem (P) $(x^{0} \in
\texttt{wloc}\min P)$, the functions $L, \,L_{x},
\,L_{y},\,L_{xx}, \,L_{xy}$ and $L_{yy}$ are continuous in some
neighborhood of the graph $\{(t, x^{0}(t),
(^{c}D_{t_{0}+}^{\alpha}x^{0})(t)|[t_{0}, t_{1}]\}.$ Then the
following inequality holds:
\begin{equation}\label{equ13}
\langle P(t)r, \,r\rangle \geq 0, \,\,\,  \forall t \in T_{1},
\,\, \forall r \in R^{n}. 
\end{equation}%
\end{theorem}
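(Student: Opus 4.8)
The goal is to show that at every continuity point $t$ of $^cD^\alpha_{t_0+}x^0$, the matrix $P(t)=L_{yy}(t,x^0(t),(^cD^\alpha_{t_0+}x^0)(t))$ is positive semidefinite. Let me think about the structure here.

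We have the second variation inequality (4.12):
$$\int_{t_0}^{t_1}(t_1-t)^{\beta-1}[\langle P(t)(^cD^\alpha h)(t),(^cD^\alpha h)(t)\rangle + 2\langle Q(t)h(t),(^cD^\alpha h)(t)\rangle + \langle R(t)h(t),h(t)\rangle]dt \geq 0$$

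for all $h \in PC_0^\alpha$.

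The classical strategy: concentrate the variation near a point $\bar t \in T_1$. Make $h$ small in sup-norm but make $^cD^\alpha h$ large in a shrinking neighborhood. Then the $P$ term (quadratic in the derivative) dominates the $Q$ and $R$ terms (which involve $h$ itself, which stays small).

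The paper's key innovation: the abstract mentions "introducing a special variation depending on the parameter $k$." The obstacle cited is that no nonzero function has itself AND its Caputo derivative with compact support. So we can't use a classical bump function.

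Let me think about what variation to use.\textbf{Proof strategy.} The aim is to show that the matrix $P(t)=L_{yy}(t,x^0(t),(^cD_{t_0+}^\alpha x^0)(t))$ is positive semidefinite at every continuity point of the fractional derivative, starting from the second-variation inequality \eqref{equ12}. The classical Legendre argument localizes a variation near a chosen point $\bar t\in T_1$ in such a way that the term quadratic in $(^cD_{t_0+}^\alpha h)(t)$ dominates the terms involving $Q$ and $R$ (which are controlled by $h$ itself). The plan is to fix an arbitrary direction $r\in\mathbb{R}^n$ and $\bar t\in T_1$, and to construct a one-parameter family of admissible variations $h_k\in PC_0^\alpha([t_0,t_1],\mathbb{R}^n)$ concentrated around $\bar t$, for which $\|h_k(\cdot)\|_c\to 0$ (keeping the $Q$- and $R$-contributions negligible) while the normalized $P$-contribution converges to $\langle P(\bar t)r,r\rangle$. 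Taking $k\to\infty$ in \eqref{equ12} then forces $\langle P(\bar t)r,r\rangle\ge 0$.

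\textbf{The central obstacle and how to resolve it.} The difficulty, emphasized in the introduction and attributed to \cite{10}, is that no nontrivial function can have both itself and its Caputo derivative $^cD_{t_0+}^\alpha$ of order $0<\alpha<1$ compactly supported, so the classical compactly supported bump cannot be transplanted directly. The idea is therefore to prescribe the \emph{derivative} rather than the function: choose a piecewise-continuous, localized profile $g_k$ for $(^cD_{t_0+}^\alpha h_k)(t)$ supported on a shrinking interval $[\bar t,\bar t+1/k]$, and recover $h_k(t)=(I_{t_0+}^\alpha g_k)(t)$ by fractional integration, using Proposition~\ref{pro22}. One then scales $g_k$ by a factor depending on $k$ so that $\int_{t_0}^{t_1}(t_1-t)^{\beta-1}\langle P(t)g_k(t),g_k(t)\rangle\,dt$ stays of order one while $\|h_k\|_c\to 0$; the latter follows because $I_{t_0+}^\alpha$ is a smoothing operator, so the sup-norm of $h_k$ is controlled by the $L^1$-size of $g_k$ on the shrinking support, which tends to zero faster than the quadratic term. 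To make $h_k$ admissible we must also enforce $h_k(t_1)=0$; this is where the \emph{auxiliary parameter} $k$ in the variation is used, namely by adding a correction term so that $(I_{t_0+}^\alpha g_k)(t_1)=0$, exactly as the constant $k_0$ was tuned in Lemma~\ref{lem31}. I expect verifying this admissibility correction while preserving the scaling balance to be the main technical hurdle.

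\textbf{Key estimates.} The heart of the matter is a pair of asymptotic estimates as $k\to\infty$. First, the continuity of $P$, $Q$, $R$ near $\bar t$ (guaranteed by the hypotheses on $L$ and the fact that $\bar t\in T_1$ is a continuity point of $^cD_{t_0+}^\alpha x^0$) lets one replace $P(t),Q(t),R(t)$ by their values at $\bar t$ up to $o(1)$ on the shrinking support. Second, on $[\bar t,\bar t+1/k]$ the weight $(t_1-t)^{\beta-1}$ is bounded and bounded away from zero, so it contributes only a harmless constant factor; this is precisely why the argument works uniformly for all $\beta>0$ and $0<\alpha\le 1$ without a case split between $\beta>\alpha$ and $\beta\le\alpha$. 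One then shows that after normalization the cross term $2\langle Q h_k,{}^cD_{t_0+}^\alpha h_k\rangle$ and the pure term $\langle R h_k,h_k\rangle$ vanish in the limit, because each carries an extra power of $\|h_k\|_c$, while the leading term tends to $c\,\langle P(\bar t)r,r\rangle$ for a positive constant $c$. Passing to the limit in \eqref{equ12} yields $\langle P(\bar t)r,r\rangle\ge 0$; since $\bar t\in T_1$ and $r\in\mathbb{R}^n$ were arbitrary, \eqref{equ13} follows.
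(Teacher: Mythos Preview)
Your proposal is correct and follows essentially the same route as the paper. Both arguments prescribe the Caputo derivative $g$ of the variation on a shrinking interval around the target point, recover $h=I_{t_0+}^\alpha g$, and tune an auxiliary constant so that $h(t_1)=0$ (in the paper this constant is literally called $k$ and is subtracted from a fixed profile $f$, giving $g(t)=(f(t)-k)r$ on $[\sigma-\varepsilon,\sigma+\varepsilon]$ and $0$ elsewhere); then one shows the $P$-quadratic contribution is of order $\varepsilon$ while the $Q$- and $R$-terms are $O(\varepsilon^{1+\alpha})$ via bounds on $\|h\|$ coming from Lemma~\ref{lem21}. The only cosmetic differences are that the paper argues by contradiction (assuming $\langle P(\sigma)r,r\rangle<0$ and deriving $\delta^2J<0$) whereas you phrase it as a direct limit, and the paper splits the tail estimate for the $R$-term over $[\sigma+\varepsilon,t_1]$ explicitly, which is the place where the non-compact support of $h$ actually has to be controlled---you should make sure your ``$\|h_k\|_c\to 0$'' step covers this tail and not just the shrinking support.
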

\begin{proof}
 Let the function $x^{0}$ give the problem (P) the
minimum value. To prove the theorem, we will argue by
contradiction. Let us assume that there exists a point $\sigma \in
T_{1}$ and a vector $r \in R^{n}$ for which the inequality
\begin{equation}\label{equ14}
\langle P(\sigma)r, \,r\rangle < 0
\end{equation}%
holds. Then by virtue of (\ref{equ14}) and the continuity of the function
$P(t)$, in a neighborhood of the point $\sigma$ there will be a
sufficiently small number $\varepsilon>0$ and number $\gamma>0$
such that
\begin{equation}\label{equ15}
\langle P(t)r, \,r\rangle < -\gamma<0,\,\,\, t \in
[\sigma-\varepsilon, \sigma+\varepsilon]\subset (t_{0}, t_{1}).
\end{equation}%

Let $f(t),$ $t \in [\sigma-\varepsilon, \sigma+\varepsilon]$ be a
continuous non-constant function, $k$ is an unknown real number
and $M=\max \limits_{t \in [\sigma-\varepsilon,
\sigma+\varepsilon]}|f(t)|.$ Define the function
\begin{equation}\label{equ16}
h(t)=\frac{1}{\Gamma(\alpha)}\int\limits_{t_{0}}^{t}(t-\tau)^{\alpha-1}g(\tau)d\tau,
\,\,\, t \in [t_{0}, t_{1}],
\end{equation}%
where
$$
g(t)=\left\{
\begin{array}{c}
(f(t)-k)r,\, \, \, \, \, \, \,\, \,\,\,\,\,  t \in
[\sigma-\varepsilon,
\sigma+\varepsilon], \\
\\
0,  \, \, \, \, \,\,\,\,\,\,\,\,\,\,\,\,\, \, t \in [t_{0},
t_{1}]\backslash[\sigma-\varepsilon, \sigma+\varepsilon].
\end{array}\right.
$$

Then it is obvious that $h(t_{0})=0$ and
$(^{c}D_{t_{0}+}^{\alpha}h)(t)=g(t),$ $t \in [t_{0}, t_{1}].$
Let's determine the real number $k$ from the condition
$h(t_{1})=0.$ Then we have
$$
k=\left(\alpha
\int\limits_{\sigma-\varepsilon}^{\sigma+\varepsilon}(t_{1}-t)^{\alpha-1}f(t)dt\right)/
\left((t_{1}-(\sigma-\varepsilon))^{\alpha}-(t_{1}-(\sigma+\varepsilon))^{\alpha}\right).
$$

Therefore $h \in PC_{0}^{\alpha}([t_{0}, t_{1}], \mathbb{R}^{n}).$
By the mean value theorem $k=f(c)$, where $c \in
(\sigma-\varepsilon, \sigma+\varepsilon).$ From (\ref{equ16}), for $t \in
[\sigma-\varepsilon, \sigma+\varepsilon]$ we have
\begin{equation}\label{equ17}
\|h(t)\|\leq
\frac{1}{\Gamma(\alpha)}\int\limits_{\sigma-\varepsilon}^{t}(t-\tau)^{\alpha-1}|f(\tau)-f(c)|
\|r\|d\tau \leq \frac{2M
\|r\|}{\Gamma(\alpha+1)}(t-(\sigma-\varepsilon))^{\alpha},
\end{equation}%
and for $t \in [\sigma+\varepsilon, t_{1}]$ we have
\begin{eqnarray}\label{equ18}
&\|h(t)\|\leq
\frac{1}{\Gamma(\alpha)}\int\limits_{\sigma-\varepsilon}^{\sigma+\varepsilon}(t-\tau)^{\alpha-1}|f(\tau)-f(c)|
\|r\|d\tau  \nonumber\\
&\leq \frac{2M
\|r\|}{\Gamma(\alpha+1)}[(t-(\sigma-\varepsilon))^{\alpha}-(t-(\sigma+\varepsilon))^{\alpha}].
\end{eqnarray}%

Now, taking into account (\ref{equ16})-(\ref{equ18}), we will estimate each of the three terms included in inequality (\ref{equ12}). For this, for convenience, in each estimate the left-hand side of the resulting inequality will be denoted by $\Omega_k, k=1,2,3$. In other words, $\Omega_k$ is the absolute value of the k-th term in (\ref{equ12}). 

Thus, using inequality (\ref{equ15}), the first term of (\ref{equ12}) can be estimated as follows: 
$$
\int\limits_{t_{0}}^{t_{1}}(t_{1}-t)^{\beta-1}\langle P(t)(^{c}D_{t_{0}+}^{\alpha}h)(t),\,
(^{c}D_{t_{0}+}^{\alpha}h)(t)\rangle dt
$$
$$
=\int\limits_{\sigma-\varepsilon}^{\sigma+\varepsilon}(t_{1}-t)^{\beta-1}
(f(t)-f(c))^{2}\langle P((t)r, \, r\rangle dt
$$

\[\leq\left\{
\begin{array}{c}
-8M^{2}\gamma(t_{1}-t_{0})^{\beta-1}\varepsilon, \, \, \, \, \, \,
\,\, \,\,\,   0<\beta \leq 1, \\
\\
-8M^{2}\gamma(t_{1}-\xi)^{\beta-1}\varepsilon, \, \, \, \,
\,\,\,\,\,\,\,\,\,\,\,\, \beta>1,
\end{array} \right.\]

where $\sigma+\varepsilon<\xi<t_{1}$.  
Finally, we have
 \begin{equation}\label{equ19}
\Omega_1\leq -M_{1}\gamma \varepsilon; \; M_{1}=\min\{8M^{2}(t_{1}-t_{0})^{\beta-1},
8M^{2}(t_{1}-\xi)^{\beta-1}\}.
\end{equation}%
The second term of (\ref{equ12}) can be bounded as follows
$$
\left|2\int\limits_{t_{0}}^{t_{1}}(t_{1}-t)^{\beta-1}\langle Q(t)h(t),\,
(^{c}D_{t_{0}+}^{\alpha}h)(t)\rangle dt \right|
$$
$$
\leq \frac{8M^{2}\|Q\|\|r\|^{2}}{\Gamma(\alpha+1)}
\int\limits_{\sigma-\varepsilon}^{\sigma+\varepsilon}(t_{1}-t)^{\beta-1}
(t-(\sigma-\varepsilon))^{\alpha}dt
$$

\[\leq\left\{
\begin{array}{c}
K_{1}(t_{1}-\xi)\varepsilon^{\alpha+1}, \, \, \, \, \, \, \,\, \,\,
 0<\beta \leq
1,  \\
\\
K_{1}(t_{1}-t_{0})^{\beta-1}\varepsilon^{\alpha+1}, \, \, \, \, \,
\,\,\,\,\,\,  \beta>1,
\end{array} \right.\]
where $K_{1}=\frac{2^{\alpha+4}M^{2}\|Q\|
\|r\|^{2}}{\Gamma(\alpha+2)}, \|Q\|=\max\limits_{t \in
[t_{0}, t_{1}]}\|Q(t)\|$. Then we have
\begin{equation}\label{equ20}
\Omega_2\leq M_{2}\varepsilon^{\alpha+1};\; M_{2}=\max
\big\{K_{1}(t_{1}-\xi)^{\beta-1},\; K_{1}(t_{1}-t_{0})^{\beta-1}\big\}.
\end{equation}%
We will write the third term of (\ref{equ12}) as the sum of two terms. Taking into
account the estimates of (\ref{equ17}), we evaluate the first of them as
follows
$$
\left|\int\limits_{\sigma-\varepsilon}^{\sigma+\varepsilon}(t_{1}-t)^{\beta-1}
\langle R(t)h(t), \, h(t)\rangle dt \right| \leq \frac{4M^{2}\|R\|
\|r\|^{2}}{\Gamma^{2}(\alpha+1)}
\int\limits_{\sigma-\varepsilon}^{\sigma+\varepsilon}(t_{1}-t)^{\beta-1}
(t-(\sigma-\varepsilon))^{2\alpha}dt
$$

\[\leq\left\{
\begin{array}{c}
K_{2}(t_{1}-\xi)^{\beta-1}\varepsilon^{2\alpha+1},
 \,\, \,\, \, \,\,   0<\beta \leq 1,\, \, \\
\\
K_{2}(t_{1}-t_{0})^{\beta-1}\varepsilon^{2\alpha+1}, \, \, \, \, \,
\,\,\, \, \, \,\,\,\, \,  \beta>1,
\end{array}\right.\]
where $K_{2}=\frac{2^{2\alpha+3}M^{2}\|R\|
\|r\|^{2}}{(2\alpha+1)\Gamma^{2}(\alpha+1)}$, $\|R\|=\max\limits_{t \in
[t_{0}, t_{1}]}\|R(t)\|$. \\
 As a result we derive
 \begin{equation}\label{equ21}
 \Omega_3\leq M_{3}\varepsilon^{2\alpha+1};\;\; M_{3}=\max
\left\{K_{2}(t_{1}-\xi)^{\beta-1},
\,K_{2}(t_{1}-t_{0})^{\beta-1}\right\}.
\end{equation}%

Using Lemma \ref{lem21} and taking into account the estimates of (\ref{equ18}), the
second term can be bounded as follows
$$
\left|\int\limits_{\sigma+\varepsilon}^{t_{1}}(t_{1}-t)^{\beta-1}\langle R(t)h(t),
\, h(t)\rangle dt \right|
$$
$$
\leq \frac{4M^{2}\|R\| \|r\|^{2}}{\Gamma^{2}(\alpha+1)}
\int\limits_{\sigma+\varepsilon}^{t_{1}}(t_{1}-t)^{\beta-1}
[(t-(\sigma-\varepsilon))^{\alpha}-(t-(\sigma+\varepsilon))^{\alpha}]^{2}dt
$$
$$
\leq \frac{2^{\alpha+3}\alpha M^{2}\|R\|
\|r\|^{2}}{\Gamma^{2}(\alpha+1)}(t_{1}-(\sigma+\varepsilon))^{\alpha+\beta-1}
\int\limits_{0}^{1}(1-z)^{\beta-1}z^{\alpha-1}dz\cdot\varepsilon^{1+\alpha}
$$
$$
=K_{3}
(t_{1}-(\sigma+\varepsilon))^{\alpha+\beta-1}\varepsilon^{1+\alpha}
$$

\[\leq\left\{
\begin{array}{c}
K_{3}(t_{1}-\xi)^{\alpha+\beta-1}\varepsilon^{1+\alpha},
\, \, \, \, \, \, \,\, \,\,   0<\alpha+\beta \leq 1,\, \,\\
\\
K_{3}(t_{1}-t_{0})^{\alpha+\beta-1}\varepsilon^{1+\alpha}, \, \, \,
\, \, \,\,\,  \alpha+\beta>1,\, \, \, \, \, \, \,\, \, \,
\, \, \, \,\, \,
\end{array} \right.\]
where $K_{3}=\frac{2^{\alpha+3}M^{2}\|R\|
\|r\|^{2}\Gamma(\beta)}{\Gamma(\alpha+1)\Gamma(\alpha+\beta)}$. Hence, we get
\begin{equation}\label{equ22}
\Omega_4\leq
M_{4}\varepsilon^{1+\alpha};\;\; M_{4}=\max \left\{K_{3}(t_{1}-\xi)^{\alpha+\beta-1},
\,K_{3}(t_{1}-t_{0})^{\alpha+\beta-1}\right\}.
\end{equation}%
Taking into account estimates (\ref{equ19})-(\ref{equ22}) in (\ref{equ12}),
we arrive at the inequality
$$
\delta^{2}J(x^{0}(\cdot),
h(\cdot))\leq\varepsilon[-M_{1}\gamma+\varepsilon^{\alpha}(M_{2}+M_{3}\varepsilon^{\alpha}+M_{4})],
$$
which for sufficiently small $\varepsilon>0$ takes the form
$\delta^{2}J(x^{0}(\cdot), h(\cdot))<0,$ which contradicts the
definition of optimality $\delta^{2}J(x^{0}(\cdot), h(\cdot))\geq
0.$  \qquad\qquad\qquad$\square$
\end{proof}

\subsection {Problem with free initial condition and fixed final
condition}
Let $0< \alpha \leq 1$ and $\beta>0$.  Let's consider the
following problem in space $C^{\alpha}([t_{0}, t_{1}], \,
\mathbb{R}^{n}):$
\begin{eqnarray*}
(P_{1})\quad &&J(x(\cdot))=\int \limits_{t_{0}}^{t_{1}}(t_{1}-t)^{\beta-1}L(t,
x(t),(^{c}D_{t_{0}+}^{\alpha}x)(t))dt+ l(x(t_{0}))\rightarrow
extr, \\
&&x(t_{1})=x_{1}.
\end{eqnarray*}%

Here the segment $[t_{0}, t_{1}]$ is assumed to be fixed and
finite, $t_{0}< t_{1}.$ $L(t, x, y)$ is a continuous function with
continuous partial derivatives $L_{x}$ and $L_{y}$, $l$ is a
continuous function with continuous derivative $l_{x_0}$.
\begin{theorem}\label{theo43}
Let $0< \alpha \leq 1,$ $\beta>0$ and the
function $x^{0}$ deliver a weak local minimum in the problem
(P$_{1}$)($x^{0}\in \texttt{wloc}\min P_{1}$), the functions $L,
L_{x}, L_{y}$ are continuous in some neighborhood of the graph
$\{(t, x^{0}(t), (^{c}D_{t_{0}+}^{\alpha}x^{0})(t))| \, t \in
[t_{0}, t_{1}]\}$, the function $l$ is continuously differentiable
in the neighborhood of the point $x^{0}(t_{0}).$ Then

1) if $0< \alpha < \beta$, then the function $x^{0}$ is a solution
to the problem
\begin{eqnarray}\label{equ23}
(t_{1}-t)^{1-\alpha}&&(I_{t_{1}-}^{\alpha}b)(t)+(t_{1}-t)^{\beta-\alpha}L_{y}(t)=0,
\ \ \ \ t\in \left[t_0,\ t_1\right], \nonumber\\
&&\int\limits_{t_{0}}^{t_{1}}(t_{1}-t)^{\beta-1}L_{x}(t)dt+l_{x_{0}}=0,\\
&&x(t_{1})=x_{1}\nonumber, 
\end{eqnarray}%

2) if $0<\beta \leq \alpha \leq 1,$ then function $x^{0}$ is a
solution to the problem
\begin{eqnarray}\label{equ24}
(t_{1}-t)^{1-\beta}&&(I_{t_{1}-}^{\alpha}b)(t)+L_{y}(t)+k\frac{(t_{1}-t)^{\alpha-\beta}}{\Gamma(\alpha)}=0,
\ \ \ \ t\in \left[t_0,\ t_1\right],  \nonumber\\
&&\int\limits^{t_1}_{t_0}{{(t_1-t)}^{\beta
-1}L_x(t)dt}+l_{x_{0}}+k=0, \\
&&x(t_{1})=x_{1},\nonumber
\end{eqnarray}%
where $0 \neq k \in \mathbb{R}^{n}$ some constant, $b(t)=(t_{1}-t)^{\beta-1}L_{x}(t, x^{0}(t),
(^{c}D_{t_{0}+}^{\alpha}x^{0})(t))$.
\end{theorem}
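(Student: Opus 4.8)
The plan is to mimic the proof of Theorem \ref{teo41}, but to exploit the larger class of admissible variations permitted by the free initial condition, and then to read off the extra transversality relation from the Euler--Lagrange equation already obtained. Since $x(t_{1})=x_{1}$ is fixed while $x(t_{0})$ is free, the admissible variations are exactly those $h\in C^{\alpha}([t_{0},t_{1}],\mathbb{R}^{n})$ with $h(t_{1})=0$; note that $h(t_{0})=:r$ may now be an arbitrary vector. For such $h$ the scalar function $\phi(\lambda)=J(x^{0}(\cdot)+\lambda h(\cdot))$ has a local minimum at $\lambda=0$, so differentiating under the integral sign (justified by the continuity hypotheses on $L,L_{x},L_{y}$ and on $l,l_{x_{0}}$) and setting $\phi'(0)=0$ yields
$$
\int_{t_{0}}^{t_{1}}(t_{1}-t)^{\beta-1}\big[\langle L_{x}(t),h(t)\rangle+\langle L_{y}(t),(^{c}D_{t_{0}+}^{\alpha}h)(t)\rangle\big]dt+\langle l_{x_{0}},r\rangle=0.
$$

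First I would restrict to variations with $h(t_{0})=0$ as well, i.e.\ $h\in C_{0}^{\alpha}([t_{0},t_{1}],\mathbb{R}^{n})$. Then the boundary term $\langle l_{x_{0}},r\rangle$ drops out and the remaining integral identity is precisely (\ref{equ7}) with $a_{0}=L_{x}$, $a_{1}=L_{y}$. Lemma \ref{lem33} then delivers the Euler--Lagrange equation: the first line of (\ref{equ23}) when $\beta>\alpha$, and the first line of (\ref{equ24}) (with its constant $k$) when $0<\beta\le\alpha$. This step is entirely parallel to Theorem \ref{teo41}.

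The new ingredient is the transversality condition, obtained by allowing $r=h(t_{0})\ne 0$. Using the representation $h(t)=r+(I_{t_{0}+}^{\alpha}(^{c}D_{t_{0}+}^{\alpha}h))(t)$ from Proposition \ref{pro21} in the first term, and swapping the order of integration exactly as in the passage to (\ref{equ8}) (with $b(t)=(t_{1}-t)^{\beta-1}L_{x}(t)$), I would rewrite the first-order condition as
$$
\Big\langle \int_{t_{0}}^{t_{1}}(t_{1}-t)^{\beta-1}L_{x}(t)\,dt+l_{x_{0}},\,r\Big\rangle+\int_{t_{0}}^{t_{1}}\big\langle (I_{t_{1}-}^{\alpha}b)(t)+(t_{1}-t)^{\beta-1}L_{y}(t),\,(^{c}D_{t_{0}+}^{\alpha}h)(t)\big\rangle dt=0.
$$
Now I substitute the Euler equation into the integrand. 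When $\beta>\alpha$, multiplying the first line of (\ref{equ23}) by $(t_{1}-t)^{\alpha-1}$ shows that $(I_{t_{1}-}^{\alpha}b)(t)+(t_{1}-t)^{\beta-1}L_{y}(t)\equiv 0$, so the integral vanishes and, since $r$ is arbitrary, the remaining bracket must vanish, giving the second line of (\ref{equ23}). When $0<\beta\le\alpha$, the same manipulation of (\ref{equ24}) gives $(I_{t_{1}-}^{\alpha}b)(t)+(t_{1}-t)^{\beta-1}L_{y}(t)=-\tfrac{k}{\Gamma(\alpha)}(t_{1}-t)^{\alpha-1}$; the integral then reduces to $-\big\langle k,\tfrac{1}{\Gamma(\alpha)}\int_{t_{0}}^{t_{1}}(t_{1}-t)^{\alpha-1}(^{c}D_{t_{0}+}^{\alpha}h)(t)\,dt\big\rangle=-\langle k,\,h(t_{1})-h(t_{0})\rangle=\langle k,r\rangle$, using $(I_{t_{0}+}^{\alpha}(^{c}D_{t_{0}+}^{\alpha}h))(t_{1})=h(t_{1})-h(t_{0})=-r$. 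Arbitrariness of $r$ then yields the second line of (\ref{equ24}). Finally $x(t_{1})=x_{1}$ holds by admissibility.

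I expect the main (if modest) obstacle to be the bookkeeping in the last case: recognizing that after inserting the Euler equation the leftover integral is exactly $\langle k,\,h(t_{1})-h(t_{0})\rangle$ via Proposition \ref{pro21}, which is precisely what produces the extra $+k$ distinguishing the transversality condition in (\ref{equ24}) from that in (\ref{equ23}). The Fubini interchange itself is already legitimized by Lemmas \ref{lem32} and \ref{lem33}, and differentiation under the integral sign is routine given the stated smoothness of $L$ and $l$.
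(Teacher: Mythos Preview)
Your proposal is correct and follows essentially the same route as the paper: derive the first variation, restrict to $h\in C_{0}^{\alpha}$ to obtain the Euler--Lagrange equation via the fractional Du Bois--Reymond lemma, and then substitute that equation back into the first-order condition (using $(I_{t_{0}+}^{\alpha}(^{c}D_{t_{0}+}^{\alpha}h))(t_{1})=h(t_{1})-h(t_{0})$) to isolate the transversality relation. The only cosmetic difference is that the paper carries out the computation after writing $h=\varphi\,r$ with scalar $\varphi$ and invoking Lemma~\ref{lem31}, whereas you work directly in vector form via Lemma~\ref{lem33}; the substance is identical.
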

\begin{proof} Let $0< \alpha \leq 1$ and $\beta>0$. Under our
assumptions on $L, L_{x}, L_{y}, l, l_{x_{0}}$, we can conclude
that for the solution $x^{0} \in C^{\alpha}([t_{0}, t_{1}], \,
\mathbb{R}^{n})$ of the problem (P$_{1}$) and any (fixed)
variation $h \in C^{\alpha}([t_{0}, t_{1}], \, \mathbb{R}^{n})$,
$h(t_{1})=0$ the first variation $\delta J (x^{0}(\cdot),
h(\cdot))=\frac{d}{d \lambda}J(x^{0}+\lambda h)|_{\lambda=0}=0.$
Standard calculations show that
\begin{eqnarray}\label{equ25}
&\int\limits_{t_{0}}^{t_{1}}(t_{1}-t)^{\beta-1}\left[\langle
L_{x}(t), h(t)\rangle+\langle L_{y}(t),
(^{c}D_{t_{0}+}^{\alpha}h)(t)\rangle \right]dt \nonumber\\
&+\langle l_{x_{0}}, h(t_{0})\rangle=0, \,\,\, \forall h \in
C^{\alpha}([t_{0}, t_{1}], \, \mathbb{R}^{n}), h(t_{1})=0.
\end{eqnarray}%
For $h(t)=\varphi(t)r,$ where $\varphi \in C^{\alpha}([t_{0},
t_{1}], \ \mathbb{R}),$ $\varphi(t_{1})=0$ and $r \in
\mathbb{R}^{n},$ we write the equality (\ref{equ25}) in the form
\begin{eqnarray}\label{equ26}
&\int\limits_{t_{0}}^{t_{1}}(t_{1}-t)^{\beta-1}\langle
(t_{1}-t)^{1-\beta}(I_{t_{1}-}^{\alpha}b)(t) +L_{y}(t), r\rangle (^{c}D_{t_{0}+}^{\alpha}\varphi)(t)dt\nonumber\\
&+\left\langle
\int\limits_{t_{0}}^{t_{1}}(t_{1}-t)^{\beta-1}L_{x}(t)dt+l_{x_{0}},
r \right \rangle \varphi(t_{0})=0.
\end{eqnarray}%

If $ \varphi(t_{0})=0$ and $0< \beta \leq \alpha \leq 1$, then by
virtue of Lemma \ref{lem31} we have
\begin{equation}\label{equ27}
\langle ((t_{1}-t)^{1-\beta}(I_{t_{1}-}^{\alpha}b)(t) +L_{y}(t)),
r\rangle =\frac{k_{0}}{\Gamma(\alpha)} (t_{1}-t)^{\alpha-\beta},
\end{equation}%
where $k_{0}$ is some real number.

Taking this equality into account in (\ref{equ26}) we get
$$
\frac{k_{0}}{\Gamma(\alpha)}\int\limits_{t_{0}}^{t_{1}}(t_{1}-t)^{\alpha-1}
(^{c}D_{t_{0}+}^{\alpha}\varphi)(t)dt+\left\langle
\int\limits_{t_{0}}^{t_{1}}(t_{1}-t)^{\beta-1}L_{x}(t)dt+l_{x_{0}},
r \right\rangle \varphi(t_{0})
$$
\begin{equation}\label{equ28}
=k_{0}\varphi(t_{1})+\left[\left\langle
\int\limits_{t_{0}}^{t_{1}}(t_{1}-t)^{\beta-1}L_{x}(t)dt+l_{x_{0}},
r \right\rangle -k_{0}\right]\varphi(t_{0})=0.
\end{equation}%

Then from (\ref{equ27}) and (\ref{equ28}) it follows that for $0< \beta \leq \alpha
\leq 1$ the solution $x^{0}$ of problem (P) must satisfy system
(\ref{equ24}).

Under the condition $\varphi(t_{0})=0$ and $0<\alpha < \beta$ from
(\ref{equ26}) using Lemma \ref{lem31} we obtain
\begin{equation}\label{equ29}
[(t_{1}-t)^{1-\alpha}(I_{t_{1}-}^{\alpha}b)(t)+(t_{1}-t)^{\beta-\alpha}L_{y}(t)]=0.
\end{equation}%

Then from (\ref{equ26}) and (\ref{equ29}) it follows that for $0< \alpha <\beta$ the
function $x^{0}$ must be the solution to problem (\ref{equ23}). \qquad\qquad\qquad\qquad\qquad\qquad\qquad\qquad\qquad\quad$\square$
\end{proof}
\begin{example}\label{exam42}
For $0< \alpha \leq 1$ and $\beta>0$ we
consider the following extremal problem in the space
$C^{\alpha}([0, 1], \ \mathbb{R}):$
\begin{equation}\label{equ30}
J(x(\cdot))=\int\limits_{0}^{1}(1-t)^{\beta-1}(^{c}D_{0+}^{\alpha}x)^{2}(t)dt+x^{2}(0)\rightarrow
extr, \ \ \ \ \ x(1)=1.
\end{equation}%

Let $0< \beta \leq \alpha \leq 1.$ In this case by condition (\ref{equ24})
we have
\begin{eqnarray}
&(^{c}D_{0+}^{\alpha}x)(t)=-k
\frac{(1-t)^{\alpha-\beta}}{2\Gamma(\alpha)}, \ \ \ t \in [0,\ 1], \ \ \label{equ31}\nonumber\\
&2x(0)+k=0, \ \ \ x(1)=1. 
\end{eqnarray}

Solution of equation in (\ref{equ31})
$$
x(t)=x(0)-\frac{k}{2\Gamma^{2}(\alpha)}\int\limits_{0}^{t}(t-\tau)^{\alpha-1}(1-\tau)^{\alpha-\beta}d\tau,
\,\,\, t \in [0, \ 1].
$$

Taking into account this solution in the system (\ref{equ31}) we find
$$
k=-\frac{2(2\alpha-\beta)\Gamma^{2}(\alpha)}{1+(2\alpha-\beta)\Gamma^{2}(\alpha)},
\,\,\,
x(0)=\frac{(2\alpha-\beta)\Gamma^{2}(\alpha)}{1+(2\alpha-\beta)\Gamma^{2}(\alpha)},
$$
and the extremal has the form
$$
x(t)=\frac{(2\alpha-\beta)\Gamma^{2}(\alpha)}{1+(2\alpha-\beta)\Gamma^{2}(\alpha)}
\left[1+\frac{1}{\Gamma^{2}(\alpha)}\int\limits_{0}^{t}(t-\tau)^{\alpha-1}(1-\tau)^{\alpha-\beta}d\tau
\right], \,\,\, t \in [0, \ 1].
$$

Now consider the case $\beta>\alpha>0.$ In this case by condition
(\ref{equ23}) we have
\begin{eqnarray*}
&(^{c}D_{0+}^{\alpha}x)(t)=0,\,\,\,t \in [0, \ 1],\nonumber\\
&x(0)=0, \ \ x(1)=1.
\end{eqnarray*}

Hence we get $x(t)=x(0)=0$, $t \in [0, 1]$. Therefore, problem
(\ref{equ30}) does not have solution.

We will consider problem (P$_{1}$). For any (fixed) variation $h
\in PC^{\alpha}([t_{0}, t_{1}], \ \mathbb{R}^{n}),$ $h(t_{1})=0$
the second variation of the functional satisfies the condition
$$
\delta^{2}J(x^{0}(\cdot), h(\cdot))=\frac{d^{2}}{d
\lambda^{2}}J(x^{0}+\lambda
h)|_{\lambda=0}
$$
$$
=\int\limits_{t_{0}}^{t_{1}}(t_{1}-t)^{\beta-1}
[\langle
P(t)(^{c}D_{t_{0}+}^{\alpha}h)(t),(^{c}D_{t_{0}+}^{\alpha}h)(t)\rangle
$$
\begin{equation}\label{equ32}
+2\langle Q(t)h(t), (^{c}D_{t_{0}+}^{\alpha}h)(t)\rangle+\langle
R(t)h(t), \ h(t)\rangle]dt+\langle l_{x_{0}x_{0}}h(t_{0}), \
h(t_{0})\rangle \geq 0. 
\end{equation}%
\end{example}
\begin{theorem}\label{teo44}
Let $0< \alpha \leq 1$, $\beta>0$ and the
function $x^{0}\in PC^{\alpha}([t_{0}, \,t_{1}],
\,\mathbb{R}^{n})$ deliver a weak local minimum in the problem
(P$_{1}$) ($x^{0}\in \texttt{wloc}\min P_{1}$), the functions $L,
L_{x}, L_{y}, L_{xx}, L_{xy}$ and $L_{yy}$ are continuous in some
neighborhood of the graph $\{(t, x^{0}(t),
(^{c}D_{t_{0}+}^{\alpha}x^{0})(t)| [t_{0}, t_{1}]\}$, the function
$l$ is twice continuously differentiable in the neighborhood of
the point $x^{0}(t_{0})$. Then the following inequality hold:
\begin{equation}\label{equ33}
\langle P(t)r, r \rangle \geq 0, \,\,\, \forall t \in T_{1},
\,\,\,  \forall r \in R^{n}.
\end{equation}%
\end{theorem}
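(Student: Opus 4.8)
The plan is to argue by contradiction and to reuse, essentially verbatim, the localized variation and the four estimates from the proof of Theorem \ref{teo42}. Suppose the assertion fails, so that there exist a point $\sigma \in T_{1}$ and a vector $r \in \mathbb{R}^{n}$ with $\langle P(\sigma)r,\,r\rangle < 0$. Since $\sigma$ is a continuity point of ${}^{c}D_{t_{0}+}^{\alpha}x^{0}$, the function $P(\cdot)$ is continuous there, so there are $\varepsilon>0$ and $\gamma>0$ with $[\sigma-\varepsilon,\sigma+\varepsilon]\subset(t_{0},t_{1})$ and $\langle P(t)r,\,r\rangle < -\gamma < 0$ on that subinterval, exactly as in (\ref{equ15}).

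Next I would take the very same variation $h$ defined by (\ref{equ16}), with $g$ supported in $[\sigma-\varepsilon,\sigma+\varepsilon]$ and the real number $k$ fixed by the requirement $h(t_{1})=0$. The decisive observation is that, because the support of $g$ lies strictly inside $(t_{0},t_{1})$ and $h$ is obtained by integrating from $t_{0}$, one has $h(t)=0$ for every $t\le\sigma-\varepsilon$, and in particular $h(t_{0})=0$. Consequently this $h$ satisfies $h(t_{1})=0$ and is an admissible variation for $(P_{1})$, while at the same time the boundary term $\langle l_{x_{0}x_{0}}h(t_{0}),\,h(t_{0})\rangle$ appearing in the second variation (\ref{equ32}) vanishes identically.

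With that boundary term removed, the inequality (\ref{equ32}) collapses to precisely the integral expression (\ref{equ12}) already analyzed in Theorem \ref{teo42}. I would then invoke the estimates (\ref{equ19})--(\ref{equ22}) without any change, since they depend only on $h$, $P$, $Q$, $R$ and the geometry of the support $[\sigma-\varepsilon,\sigma+\varepsilon]$, none of which involves the endpoint cost $l$. Summing these bounds yields
$$
\delta^{2}J(x^{0}(\cdot),h(\cdot))\le\varepsilon\big[-M_{1}\gamma+\varepsilon^{\alpha}(M_{2}+M_{3}\varepsilon^{\alpha}+M_{4})\big],
$$
which is strictly negative for all sufficiently small $\varepsilon>0$, contradicting the optimality inequality $\delta^{2}J(x^{0}(\cdot),h(\cdot))\ge0$. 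This establishes (\ref{equ33}).

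I do not expect a genuine obstacle here: the entire analytic difficulty of the fixed-endpoint case is already absorbed into Theorem \ref{teo42}, and the only new ingredient — the free-initial-endpoint term $\langle l_{x_{0}x_{0}}h(t_{0}),\,h(t_{0})\rangle$ — is neutralized automatically because the test variation is localized away from $t_{0}$. The single point deserving care is to confirm that choosing $k$ so as to enforce $h(t_{1})=0$ does not disturb the property $h(t_{0})=0$; this is immediate from the explicit formula (\ref{equ16}) for $h$, whose lower limit of integration is $t_{0}$ and whose integrand $g$ vanishes on $[t_{0},\sigma-\varepsilon]$.
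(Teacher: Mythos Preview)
Your argument is correct, and it is genuinely simpler than the paper's own proof. The paper does \emph{not} reuse the variation (\ref{equ16}) from Theorem~\ref{teo42}; instead it builds a new variation
\[
h(t)=h(t_{0})+\frac{1}{\Gamma(\alpha)}\int_{t_{0}}^{t}(t-\tau)^{\alpha-1}g(\tau)\,d\tau,\qquad
g(t)=\begin{cases}r,& t\in[\sigma-\varepsilon,\sigma+\varepsilon],\\ 0,&\text{otherwise},\end{cases}
\]
with the constant $h(t_{0})$ determined by the requirement $h(t_{1})=0$. Because $g$ is the constant vector $r$ on its support (rather than $(f(t)-k)r$), this $h$ has $h(t_{0})\neq 0$, so the boundary term $\langle l_{x_{0}x_{0}}h(t_{0}),h(t_{0})\rangle$ survives and must be estimated separately. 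The paper then carries out six estimates $\Omega_{5},\dots,\Omega_{10}$ (the integral over $[t_{0},\sigma-\varepsilon]$ no longer vanishes either), each bounded by a constant times $\varepsilon^{1+\alpha}$ or higher, and closes the contradiction as you do.

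Your route avoids all of this extra work: since the Theorem~\ref{teo42} variation already lies in $PC_{0}^{\alpha}$, it is in particular admissible for $(P_{1})$, kills the $l_{x_{0}x_{0}}$ term outright, and lets you invoke (\ref{equ19})--(\ref{equ22}) verbatim. What the paper's approach buys is a demonstration that one can handle the free endpoint with a variation that is genuinely nonzero there---closer in spirit to the point they emphasize about adapting the classical Legendre argument to endpoints that are not both fixed---but for the bare statement (\ref{equ33}) your shortcut is entirely legitimate.
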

\begin{proof}
To prove the theorem, we will argue by
contradiction. Let us assume that there exist a point $\sigma \in
T_{1}$ and a vector $r \in \mathbb{R}^{n}$ for which the
inequality
\begin{equation}\label{equ34}
\langle P(\sigma)r, r \rangle < 0
\end{equation}%
holds. Then exists a sufficiently small number $0<\varepsilon \ll
1$ and number $\gamma >0$ such that
\begin{equation}\label{equ35}
\langle P(t)r, r \rangle < -\gamma < 0, \,\,\, t \in
[\sigma-\varepsilon, \sigma+\varepsilon]\subset(t_{0}, t_{1}).
\end{equation}%

Define the function
$$
h(t)=h(t_{0})+\frac{1}{\Gamma(\alpha)}\int\limits_{t_{0}}^{t}(t-\tau)^{\alpha-1}g(\tau)d\tau,
$$
where
\[g(t)=\left\{ \begin{aligned}
r, \ \ \ \ \ \ \ \ \ \ \ \ \  t \in [\sigma-\varepsilon, \sigma+\varepsilon], \\
\\
0, \ \ \ \ t \in [t_{0}, t_{1}]\backslash [\sigma-\varepsilon,
\sigma+\varepsilon].
\end{aligned} \right.\]

We require that the condition $h(t_{1})=0$ be satisfied. Then
$$
h(t_{0})=-\frac{1}{\Gamma(\alpha)}\int\limits_{t_{0}}^{t_{1}}(t_{1}-\tau)^{\alpha-1}g(\tau)d\tau
=-\frac{r}{\Gamma(\alpha)}\int\limits_{\sigma-\varepsilon}^{\sigma+\varepsilon}(t_{1}-\tau)^{\alpha-1}d\tau
$$
$$
=-\frac{r}{\Gamma(\alpha+1)}[(t_{1}-(\sigma-\varepsilon))^{\alpha}-(t_{1}-(\sigma+\varepsilon))^{\alpha}].
$$

Obviously, when $t \in [t_{0}, \sigma-\varepsilon],$ we have
$$
h(t)=-\frac{r}{\Gamma(\alpha+1)}[(t_{1}-(\sigma-\varepsilon))^{\alpha}-(t_{1}-(\sigma+\varepsilon))^{\alpha}],
$$
when $t \in [\sigma-\varepsilon, \ \sigma+\varepsilon],$ we have
$$
h(t)=-\frac{r}{\Gamma(\alpha+1)}[(t_{1}-(\sigma-\varepsilon))^{\alpha}-(t_{1}-(\sigma+\varepsilon))^{\alpha}]
+\frac{r}{\Gamma(\alpha+1)}(t-(\sigma-\varepsilon))^{\alpha},
$$
and when $[\sigma+\varepsilon, t_{1}],$ we have
$$
h(t)=-\frac{r}{\Gamma(\alpha+1)}[(t_{1}-(\sigma-\varepsilon))^{\alpha}-(t_{1}-(\sigma+\varepsilon))^{\alpha}]
$$
$$
+\frac{r}{\Gamma(\alpha+1)}[(t-(\sigma-\varepsilon))^{\alpha}-(t-(\sigma+\varepsilon))^{\alpha}].
$$

Now, as before, we estimate each term included in formula (\ref{equ32}) of the second variation of the functional, the number of which is equal to four. In what follows, considering that we write the third term as the sum of three terms, the number $\Omega_k, k=5,...10$ expressing the absolute value of the $k$-th term in (\ref{equ32}) is six. 

Thus, using inequality (\ref{equ35}), the first term can be estimated as follows
\begin{eqnarray*}
&\int\limits_{t_{0}}^{t_{1}}(t_{1}-t)^{\beta-1}\langle
P(t)(^{c}D_{t_{0}+}^{\alpha}h)(t),
(^{c}D_{t_{0}+}^{\alpha}h)(t)\rangle
dt=\int\limits_{\sigma-\varepsilon}^{\sigma+\varepsilon}(t_{1}-t)^{\beta-1}\langle
P(t)r, r\rangle dt \\
&< \left\{ \begin{aligned}
-\gamma(t_{1}-t_{0})^{\beta-1}2\varepsilon,  \ \  0<\beta \leq 1, \ \ \ \      \\
\\
-\gamma(t_{1}-\xi)^{\beta-1}2\varepsilon, \ \ \ \ \beta >1,\ \ \ \
\ \ \ \
\end{aligned} \right. 
\end{eqnarray*}
whence
\begin{equation}\label{equ36}
\Omega_5 \leq -M_{5}\gamma \varepsilon,\; M_{5}=\min \{2(t_{1}-t_{0})^{\beta-1},
2(t_{1}-\xi)^{\beta-1}\},\;\sigma+\varepsilon < \xi < t_{1}.
\end{equation}%

Further, the second term can be bounded as follows
$$
\left|2\int\limits_{t_{0}}^{t_{1}}(t_{1}-t)^{\beta-1}\langle
Q(t)h(t),(^{c}D_{t_{0}+}^{\alpha}h)(t)\rangle dt \right|
$$
$$
=\Big|2\int\limits_{\sigma-\varepsilon}^{\sigma+\varepsilon}(t_{1}-t)^{\beta-1}
[(t-(\sigma-\varepsilon))^{\alpha}-((t_{1}-(\sigma-\varepsilon))^{\alpha}
$$
$$
-(t_{1}-(\sigma+\varepsilon))^{\alpha}]
\langle Q(t)r, r \rangle dt \Big|
$$
$$
\leq 2 \|Q\|
\|r\|^{2}\int\limits_{\sigma-\varepsilon}^{\sigma+\varepsilon}(t_{1}-t)^{\beta-1}(t-(\sigma-\varepsilon))^{\alpha}dt
+2\|Q\|
\|r\|^{2}((t_{1}-(\sigma-\varepsilon))^\alpha
$$
$$
-(t_{1}-(\sigma+\varepsilon))^{\alpha})
\int\limits_{\sigma-\varepsilon}^{\sigma+\varepsilon}(t_{1}-t)^{\beta-1}dt
$$
\[\leq \left\{ \begin{aligned}
K_{4}(t_{1}-\xi)^{\beta-1}\varepsilon^{1+\alpha},  \ \  0<\beta \leq 1, \ \ \ \      \\
K_{4}(t_{1}-t_{0})^{\beta-1}\varepsilon^{1+\alpha}, \ \ \ \ \beta
>1, \ \ \ \ \ \ \ \
\end{aligned} \right. \]
where $K_{4}=2^{\alpha+2}\frac{\alpha+2}{\alpha+1}\|Q\|\|r\|^{2}$. It means that 
\begin{equation}\label{equ37}
\Omega_6\leq M_{6}\varepsilon^{1+\alpha},\; M_{6}=\max \{K_{4}(t_{1}-\xi)^{\beta-1},
K_{4}(t_{1}-t_{0})^{\beta-1}\}.
\end{equation}%

As noted above, we write the third term as the sum of three terms. We
evaluate the  this terms as follows.

a) $$
\left|\int\limits_{t_{0}}^{\sigma-\varepsilon}(t_{1}-t)^{\beta-1}
\langle R(t)h(t), h(t) \rangle dt \right|
$$
$$
=\left|\frac{[(t_{1}-(\sigma-\varepsilon))^{\alpha}-(t_{1}-(\sigma+\varepsilon))^{\alpha}]^{2}}{\Gamma^{2}(\alpha+1)}
\int\limits_{t_{0}}^{\sigma-\varepsilon}(t_{1}-t)^{\beta-1}\langle
R(t)r, r \rangle dt \right|
$$
\[\leq \left\{ \begin{aligned}
K_{5}(t_{1}-\xi)^{\beta-1}(t_{1}-t_{0})\varepsilon^{\alpha+1},  \ \  0<\beta \leq 1, \ \ \ \      \\
\\
K_{5}(t_{1}-t_{0})^{\beta}\varepsilon^{\alpha+1}, \ \ \ \ \beta
>1, \ \ \ \ \ \ \ \ \ \ \ \ \ \ \ \ \ \ \ \ \
\end{aligned} \right.\] 

where $K_{5}=\frac{2^{\alpha+1} \alpha \|R\|
\|r\|^{2}}{\Gamma^{2}(\alpha+1)}(t_{1}-\xi)^{\alpha-1}$. Therefore
\begin{equation}\label{equ38}
\Omega_7\leq M_{7}\varepsilon^{\alpha+1},\; M_{7}=\max \{K_{5}(t_{1}-\xi)^{\beta-1}(t_{1}-t_{0}),
K_{5}(t_{1}-t_{0})^{\beta}\}.
\end{equation}%

b) $$
\left|\int\limits_{\sigma-\varepsilon}^{\sigma+\varepsilon}(t_{1}-t)^{\beta-1}
\langle R(t)h(t), h(t) \rangle dt \right| \leq
\frac{2\|R\|\|r\|^{2}}{\Gamma^{2}(\alpha+1)}
\int\limits_{\sigma-\varepsilon}^{\sigma+\varepsilon}(t_{1}-t)^{\beta-1}(t-(\sigma-\varepsilon))^{2\alpha}dt
$$
$$
+[(t_{1}-(\sigma-\varepsilon))^{\alpha}-(t_{1}-(\sigma+\varepsilon))^{\alpha}]^{2}
\int\limits_{\sigma-\varepsilon}^{\sigma+\varepsilon}(t_{1}-t)^{\beta-1}dt
$$
\[\leq \left\{ \begin{aligned}
K_{6}(t_{1}-\xi)^{\beta-1}\varepsilon^{2\alpha+1},  \ \  0<\beta \leq 1, \ \ \ \      \\
\\
K_{6}(t_{1}-t_{0})^{\beta-1}\varepsilon^{2\alpha+1}, \ \ \ \ \beta
>1,\ \ \ \ \ \ \ \
\end{aligned} \right. \]
where $K_{6}=\frac{2^{2\alpha+2} \|R\|
\|r\|^{2}}{\Gamma^{2}(\alpha+1)}\left(\frac{1}{2\alpha+1}+2\alpha(t_{1}-\xi)^{\alpha-1}\right)$. Finally we have
\begin{equation}\label{equ39}
\Omega_8\leq M_{8}\varepsilon^{2\alpha+1},\; M_{8}=\max \{K_{6}(t_{1}-\xi)^{\beta-1}, K_{6}(t_{1}-t_{0})^{\beta-1}\}.
\end{equation}%

c) $$
\left|\int\limits_{\sigma+\varepsilon}^{t_{1}}(t_{1}-t)^{\beta-1}
\langle R(t)h(t), h(t) \rangle dt \right| \leq
\frac{2\|R\|\|r\|^{2}}{\Gamma^{2}(\alpha+1)}
[((t_{1}-(\sigma-\varepsilon))^{\alpha}-(t_{1}-(\sigma+\varepsilon))^{\alpha})]^{2}
$$
$$
\times
\int\limits_{\sigma+\varepsilon}^{t_{1}}(t_{1}-t)^{\beta-1}dt+\int\limits_{\sigma+\varepsilon}^{t_{1}}(t_{1}-t)^{\beta-1}
((t-(\sigma-\varepsilon))^{\alpha}-(t-(\sigma+\varepsilon))^{\alpha})^{2}dt]\leq
\frac{2\|R\|\|r\|^{2}}{\Gamma^{2}(\alpha+1)}
$$
$$
\times \Big[\frac{\alpha}{\beta}(2\varepsilon)^{1+\alpha}
(t_{1}-(\sigma+\varepsilon))^{\alpha-1}(t_{1}-(\sigma+\varepsilon))^{\beta}
$$
$$
+\alpha(2\varepsilon)^{1+\alpha}
\int\limits_{\sigma+\varepsilon}^{t_{1}}(t_{1}-t)^{\beta-1}
(t-(\sigma+\varepsilon))^{\alpha-1}dt\Big]
$$
\[\leq \left\{ \begin{aligned}
K_{7}(t_{1}-\xi)^{\alpha+\beta-1}\varepsilon^{1+\alpha},  \ \  0<\alpha+\beta-1 \leq 1, \ \ \ \      \\
K_{7}(t_{1}-t_{0})^{\alpha+\beta-1}\varepsilon^{1+\alpha}, \ \ \ \
\alpha+\beta-1>1,\ \ \ \ \ \ \ \
\end{aligned} \right. \]
where $K_{7}=\frac{2^{\alpha+2}\alpha \|R\|
\|r\|^{2}}{\Gamma^{2}(\alpha+1)}\left(\frac{1}{\beta}+\frac{\Gamma(\beta)\Gamma(\alpha)}{\Gamma(\alpha+\beta)}\right)$. As a result, we get
\begin{equation}\label{equ40}
\Omega_9\leq M_{9}\varepsilon^{\alpha+1},\; M_{9}=\max \{K_{7}(t_{1}-\xi)^{\alpha+\beta-1},
K_{7}(t_{1}-t_{0})^{\alpha+\beta-1}\}.
\end{equation}%

The fourth term in (\ref{equ32}) can be bounded as follows
\begin{eqnarray}\label{equ41}
&|\langle l_{x_{0}x_{0}}h(t_{0}),
h(t_{0})\rangle|=\frac{1}{\Gamma^{2}(\alpha+1)}
[(t_{1}-(\sigma-\varepsilon))^{\alpha}\nonumber\\
&-(t_{1}-(\sigma+\varepsilon))^{\alpha}]^{2}
\langle l_{x_{0}x_{0}}r, r \rangle \leq
M_{10}\varepsilon^{\alpha+1}, 
\end{eqnarray}%
where  $M_{10}=\frac{2^{\alpha+1}\alpha \|l_{x_{0}x_{0}}\|
\|r\|^{2}}{\Gamma^{2}(\alpha+1)}(t_{1}-\xi)^{\alpha-1}.$

Taking into account estimates (\ref{equ36})-(\ref{equ41}) in (\ref{equ32}), we arrive at the
inequality
$$
\delta^{2}J(x(\cdot), h(\cdot))\leq \varepsilon
[-M_{5}\gamma+\varepsilon^{\alpha}(M_{6}+M_{7}+M_{8}\varepsilon^{\alpha}+M_{9}+M_{10})],
$$
which for sufficiently small $\varepsilon >0$ takes the form
$\delta^{2}J(x^{0}(\cdot), h(\cdot))< 0,$ which contradicts the
definition of optimality $\delta^{2}J(x^{0}(\cdot), h(\cdot))\geq
0.$ \qquad\qquad\qquad\qquad$\square$
\end{proof}

\subsection {Fractional Bolza problem }

For $0< \alpha \leq 1$ and $\beta>0$, the fractional Bolza problem
is called the following extremal problem without restrictions in
space $C^{\alpha}([t_{0}, t_{1}], \mathbb{R}^{n}):$
$$
(PB)\;\;B(x(\cdot))=\int\limits_{t_{0}}^{t_{1}}(t_{1}-t)^{\beta-1}L(t,
x(t),(^{c}D_{t_{0}+}^{\alpha}x)(t))dt+l(x(t_{0}),
x(t_{1}))\rightarrow extr. 
$$
The functional $(PB)$ is called the fractional Bolza functional,
the function $l$ is the terminant. Any functions of class
$C^{\alpha}([t_{0}, t_{1}], \mathbb{R}^{n})$ are admissible in
problem $(PB)$.
\begin{definition}\label{def42}
We will say that an admissible function
$x^{0}$ deliver a weak local minimum in problem $(PB)$, and write
$x^{0} \in wlocmin (PB)$ if there exists $\delta>0$ such that
$B(x(\cdot))\geq B(x^{0}(\cdot))$ for any admissible function $x$
for which
$$
\|x(\cdot)-x^{0}(\cdot)\|_{C^{\alpha}([t_{0},t_{1}],
\mathbb{R}^{n})}<\delta.
$$
\end{definition}
\begin{theorem}\label{teo45}
Let $0< \alpha \leq 1,$ $\beta>0$ and the
function $x^{0}$ deliver a weak local minimum in the problem (PB)
$(x^{0} \in wlocmin (PB)),$ the functions $L, \ L_{x}, \ L_{y}$ are
continuous in some neighborhood of the graph $\{(t, x^{0}(t),
(^{c}D_{t_{0}+}^{\alpha}x^{0})(t))| t \in [t_{0}, t_{1}]\},$ the
function $l$ is continuously differentiable in the neighborhood of
the point $(x^{0}(t_{0}), x^{0}(t_{1})).$ Then:

1) If $0< \alpha < \beta$, then the function $x^{0}$ is a solution
to the problem
\begin{eqnarray}\label{equ42}
(t_1-t)^{1-\alpha }&&(I_{t_{1}-}^{\alpha}b)(t)
+(t_{1}-t)^{\beta-\alpha}L_{y}(t)=0, \nonumber \\
&&\int\limits_{t_{0}}^{t_{1}}(t_{1}-t)^{\beta-1}L_{x}(t)dt+l_{x_{0}}=0,\\
&&l_{x_{1}}=0.\nonumber 
\end{eqnarray}
2) If $0< \beta \leq \alpha \leq 1,$ then the function $x^{0}$ is
a solution to the problem
\begin{eqnarray}\label{equ43}
&(t_1-t)^{1-\beta
}(I_{t_{1}-}^{\alpha}b)(t)+L_y\left(t\right)+l_{x_{1}}\frac{1}{\Gamma
\left(\alpha \right)} {\left(t_1-t\right)}^{\alpha -\beta }=0, \ \
t\in \left[t_0,\ t_1\right],\nonumber\\
&\int\limits^{t_1}_{t_0}{{\left(t_1-t\right)}^{\beta
-1}L_x\left(t\right)dt}+l_{x{_0}}+l_{x_{1}}=0.
\end{eqnarray}
\end{theorem}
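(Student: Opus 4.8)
The plan is to mimic the derivation of Theorem~\ref{theo43}, the only structural difference being that now \emph{both} endpoints are free, so the admissible variations $h$ range over all of $C^{\alpha}([t_{0},t_{1}],\mathbb{R}^{n})$ with no boundary restriction, and the first variation accordingly acquires two boundary terms instead of one. First I would form $\phi(\lambda)=B(x^{0}+\lambda h)$ for an arbitrary fixed $h\in C^{\alpha}([t_{0},t_{1}],\mathbb{R}^{n})$; since $x^{0}$ delivers a weak local minimum and the integrand is smooth, Fermat's theorem gives $\phi'(0)=0$, which after differentiating under the integral sign reads
\begin{equation*}
\int\limits_{t_{0}}^{t_{1}}(t_{1}-t)^{\beta-1}\bigl[\langle L_{x}(t),h(t)\rangle+\langle L_{y}(t),({}^{c}D_{t_{0}+}^{\alpha}h)(t)\rangle\bigr]dt+\langle l_{x_{0}},h(t_{0})\rangle+\langle l_{x_{1}},h(t_{1})\rangle=0.
\end{equation*}

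Next I would put $h(t)=\varphi(t)r$ with $\varphi\in C^{\alpha}([t_{0},t_{1}],\mathbb{R})$ and $r\in\mathbb{R}^{n}$, and use the representation $\varphi(t)=\varphi(t_{0})+(I_{t_{0}+}^{\alpha}({}^{c}D_{t_{0}+}^{\alpha}\varphi))(t)$ together with Fubini's theorem to move the kernel onto $a_{0}=L_{x}$, exactly as in the passage from~(\ref{equ25}) to~(\ref{equ26}). By Lemma~\ref{lem32} the resulting weight $f(t)=(t_{1}-t)^{1-\beta}(I_{t_{1}-}^{\alpha}b)(t)+L_{y}(t)$ is continuous on $[t_{0},t_{1}]$, and the variation takes the form
\begin{equation*}
\int\limits_{t_{0}}^{t_{1}}(t_{1}-t)^{\beta-1}\langle f(t),r\rangle({}^{c}D_{t_{0}+}^{\alpha}\varphi)(t)\,dt+\Bigl\langle\textstyle\int_{t_{0}}^{t_{1}}(t_{1}-t)^{\beta-1}L_{x}(t)\,dt+l_{x_{0}},r\Bigr\rangle\varphi(t_{0})+\langle l_{x_{1}},r\rangle\varphi(t_{1})=0.
\end{equation*}

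The extraction then proceeds in two stages. First, restricting to $\varphi\in C_{0}^{\alpha}$ annihilates both boundary terms, and Lemma~\ref{lem31} applied in the direction $r$ yields the Euler--Lagrange equation: for $\beta>\alpha$ one obtains $(t_{1}-t)^{\beta-\alpha}\langle f(t),r\rangle=0$, hence the first line of~(\ref{equ42}); for $\beta\leq\alpha$ one obtains $\langle f(t),r\rangle=\frac{\langle k,r\rangle}{\Gamma(\alpha)}(t_{1}-t)^{\alpha-\beta}$ for some vector $k$ (linearity in $r$ promotes the scalar constant of Lemma~\ref{lem31} to a vector, precisely as in Lemma~\ref{lem33}), giving the first line of~(\ref{equ43}). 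Second, I would feed this back into the full variation for general $\varphi$. When $\beta>\alpha$ the integral term vanishes identically, so the independent choice of $\varphi(t_{0})$ and $\varphi(t_{1})$ (possible because $\varphi(t_{0})$ is free while $(I_{t_{0}+}^{\alpha}\psi)(t_{1})$ attains any prescribed value) together with the arbitrariness of $r$ forces $\int_{t_{0}}^{t_{1}}(t_{1}-t)^{\beta-1}L_{x}\,dt+l_{x_{0}}=0$ and $l_{x_{1}}=0$. When $\beta\leq\alpha$ the integral collapses, via Proposition~\ref{pro21}, to $\langle k,r\rangle(\varphi(t_{1})-\varphi(t_{0}))$; grouping the coefficients of $\varphi(t_{0})$ and $\varphi(t_{1})$ yields $\int_{t_{0}}^{t_{1}}(t_{1}-t)^{\beta-1}L_{x}\,dt+l_{x_{0}}=k$ and $k+l_{x_{1}}=0$, and eliminating $k=-l_{x_{1}}$ delivers exactly the two relations in~(\ref{equ43}).

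The one step demanding genuine care is the bookkeeping of the Du Bois--Reymond constant in the case $0<\beta\leq\alpha$: the vector $k$ is not a priori zero, and the transversality condition at the free right end is precisely what pins it down as $k=-l_{x_{1}}$. Recognizing that the identity $\frac{1}{\Gamma(\alpha)}\int_{t_{0}}^{t_{1}}(t_{1}-t)^{\alpha-1}({}^{c}D_{t_{0}+}^{\alpha}\varphi)(t)\,dt=\varphi(t_{1})-\varphi(t_{0})$ lets the $k$-term split cleanly between the two endpoints is the crux of the argument; everything else is a direct transcription of the reasoning already used for~(\ref{equ23})--(\ref{equ29}).
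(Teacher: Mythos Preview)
Your proposal is correct and follows precisely the route the paper intends: it says only that Theorem~\ref{teo45} ``is proved similarly to Theorem~\ref{theo43},'' and your argument is the natural adaptation of that proof to the case where both endpoints are free, with the additional boundary term $\langle l_{x_{1}},h(t_{1})\rangle$ handled exactly as you describe. The identification $k=-l_{x_{1}}$ via the splitting of the integral into $\langle k,r\rangle(\varphi(t_{1})-\varphi(t_{0}))$ is the one genuinely new bookkeeping step, and you have it right.
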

Theorem \ref{teo45} is proved similarly to Theorem \ref{theo43}.

\begin{example}\label{exam43} For $0<\beta \leq \alpha \leq 1$ consider
the following Bolza problem in space $C^{\alpha}([0, 1],
\mathbb{R}):$
$$
B(x(\cdot))=\int\limits_{0}^{1}(1-t)^{\beta-1}(^{c}D_{0+}^{\alpha}x)^{2}(t)dt
+x^{2}(0)+x^{2}(1)\rightarrow extr.
$$

It is obvious that $L_{y}=2(^{c}D_{0+}^{\alpha}x),$
$l_{x_{0}}=2x(0),$ $l_{x_{1}}=2x(1).$ For this example, equation and condition in (\ref{equ43}) take the following form
\begin{equation}\label{equ48}
(^{c}D_{0+}^{\alpha}x)(t)=-\frac{x(1)}{\Gamma(\alpha)}(1-t)^{\alpha-\beta},
\,\,\, t \in [0, \ 1],
\end{equation}%
and
\begin{equation}\label{equ49}
x(0)+x(1)=0, 
\end{equation}%
respectively.

From (\ref{equ48}) we have
$$
x(t)=x(0)-\frac{x(1)}{\Gamma^{2}(\alpha)}\int\limits_{0}^{t}(t-\tau)^{\alpha-1}(1-\tau)^{\alpha-\beta}d\tau.
$$
Hence, under the condition (\ref{equ49}) we get
\begin{equation}\label{equ50}
x(t)=x(0)\left(1+\frac{1}{\Gamma^{2}(\alpha)}\int\limits_{0}^{t}(t-\tau)^{\alpha-1}(1-\tau)^{\alpha-\beta}d\tau
\right). 
\end{equation}%

From equalities (\ref{equ49}) and (46) we get
$$
x(0)\left(2+\frac{1}{(2\alpha-\beta)\Gamma^{2}(\alpha)}\right)=0.
$$

It follows that $x(0)=0.$ Therefore the extremal has the form
$x(t)=0,$ $t \in [0, 1].$
\end{example}
\begin{example}\label{exam44} For $0< \alpha< \beta$ consider the following
Bolza problem in space $C^{\alpha}([0, \ 1], \mathbb{R}):$
$$
B(x(\cdot))=\int\limits_{0}^{1}
(1-t)^{\beta-1}[x(t)+(^{c}D_{0+}^{\alpha}x)^{2}(t)]dt+x^{2}(0)
\rightarrow extr.
$$

It is obvious that $L_{x}=1$, $L_{y}=2y,$ $l_{x_{0}}=2x(0),$
$l_{x_{1}}=0.$ In this case, for the first equation and the second condition in (\ref{equ42}) we have
$$
(^{c}D_{0+}^{\alpha}x)(t)=-\frac{\Gamma(\beta)}{2\Gamma(\alpha+\beta)}(1-t)^{\alpha},
\,\,\, t \in [0, \ 1],
$$
and
$$
x(0)=-\frac{1}{2\beta}.
$$

Then for the stated problem the extremal has the form
$$
x(t)=-\frac{1}{2\beta}-\frac{\Gamma(\beta)}{2\Gamma(\alpha)\Gamma(\alpha+\beta)}
\int\limits_{0}^{t}(t-\tau)^{\alpha-1}(1-\tau)^{\alpha}d\tau,
\,\,\, t \in [0, \ 1].
$$
\end{example}
\begin{remark}\label{rem41} Let us consider the one-dimensional $(n=1)$
Bolza functional given by
$$
B(x(\cdot))=\int\limits_{0}^{1}(1-t)^{\beta-1}(^{c}D_{0+}^{\alpha}x)^{2}(t)dt
+x^{2}(0)\rightarrow extr.
$$
for all $x \in C^{\alpha}([0, \ 1], \mathbb{R}),$ where $0< \alpha
\leq 1$ and $\beta> \alpha >0$.

For this example, problem (\ref{equ42}) takes the following form
 \begin{eqnarray*}
(t_{1}-t)^{\beta-\alpha}&&(^{c}D_{0+}^{\alpha}x)(t)=0,\ \ \ \ \ t \in [t_{0}, t_{1}],  \\
&&x(0)=0.
\end{eqnarray*} 

It follows that the extremal has the form $x(t)=0,$ $t \in [0, \
1].$ This is optimal.

Now let's apply the result of work (see, Theorem 3.1. in \cite{10}) to
this example. For this example the Euler-Lagrange equations are
$$
D_{1-}^{\alpha}\left[\frac{(1-\cdot)^{\beta-1}}{\Gamma(\beta)}(^{c}D_{0+}^{\alpha}x)(\cdot)\right](t)=0.
$$
From here we have
$$
(^{c}D_{0+}^{\alpha}x)(t)=k(1-t)^{\alpha-\beta}\frac{\Gamma(\beta)}{2\Gamma(\alpha)},
$$
where $k$ is some constant. From this equality it follows that
$^{c}D_{0+}^{\alpha}x \notin L_{\infty},$ and this means that the
Euler-Lagrange equation obtained in work \cite{10} has no solution in
the space $_{c}AC_{0+}^{\alpha,\infty}$ (see \cite{10}). Therefore, in
space $C^{\alpha}([0, \ 1], \mathbb{R})$ has no solution.
\end{remark}
\begin{remark}\label{rem42} If we consider the case $\alpha=\beta=1$, that
is, the classical case, then problem (\ref{equ43}) has the form
\begin{eqnarray}\label{equ51}
&\int\limits^{t_1}_t{L_x(\tau )d\tau }+L_y(t)+l_{x{_1}}=0,\nonumber \\
&\int\limits^{t_1}_{t_0}L_x(t)dt+l_{x_{0}}+l_{x_{1}} =0.
\end{eqnarray}

From here it follows directly that the problem (\ref{equ51}) is
equivalent to the following Euler-Lagrange equation
$$
-\frac{d}{dt}L_{y}+L_{x}=0,
$$
with transversality conditions
$$
L_{y}(t_{0})=l_{x_{0}},
$$
$$
L_{y}(t_{1})=-l_{x_{1}}.
$$

Thus, we have seen that in classical calculus of variations the
Bolza problem can only be solved using the Du Bois-Reymond lemma,
without resorting to integration by parts.

Note that in article \cite{10} the Legendre conditions for problem
$(PB)$ were proved.
\end{remark}

\subsection {Problem with fixed initial condition and free final
condition}

Let $0< \alpha \leq 1$ and $\beta >0.$ Let's consider the
following problem in space $C^{\alpha}([t_{0}, t_{1}],
\mathbb{R}^{n}):$
\begin{eqnarray*}
(P_{2})\;\;&J(x(\cdot))=\int\limits_{t_{0}}^{t_{1}}(t_{1}-t)^{\beta-1}L(t,
x(t), (^{c}D_{t_{0}+}^{\alpha}x)(t))dt +l(x(t_{1}))\rightarrow
extr,\nonumber\\
&  x(t_{0})=x_{0}.
\end{eqnarray*}%

Here the segment $[t_{0}, t_{1}]$ is assumed to be fixed and
finite, $t_{0}< t_{1}$. $L(t, x, y)$ is a continuous function with
continuous partial derivatives $L_{x}$ and $L_{y}$, $l$ is a
continuous function with continuous derivative $l_{x_1}$.
\begin{theorem}\label{theo46} Let $0< \alpha \leq 1$, $\beta> 0$ and the
function $x^{0}$ deliver a weak local minimum in the problem
(P$_{2}$) $(x^{0} \in wlocmin P_{2})$, the functions $L, L_{x},
L_{y}$ are continuous in some neighborhood of the graph $\{(t,
x^{0}(t), (^{c}D_{t_{0}+}^{\alpha}x^{0})(t))| t \in [t_{0},
t_{1}]\},$ the function $l$ is continuously differentiable in the
neighborhood of the point $x^{0}(t_{1}).$ Then

1) if $0<\alpha < \beta$, then the function $x^{0}$ is a solution
to the problem
\begin{eqnarray*}
&(t_{1}-t)^{1-\alpha}(I_{t_{1}-}^{\alpha}b)(t)+(t_{1}-t)^{\beta-\alpha}L_{y}(t)=0,  \ \ \ \ t\in [t_{0}, t_{1}], \\
&x(t_{0})=x_{0},\,\,\,\,\, l_{x_{1}}=0,
\end{eqnarray*} 

2) if $0<\beta \leq \alpha \leq 1$, then the function $x^{0}$ is a
solution to the problem
\begin{eqnarray*}
&(t_{1}-t)^{1-\beta}(I_{t_{1}-}^{\alpha}b)(t)+L_{y}(t)+l_{x_{1}}\frac{(t_{1}-t)^{\alpha-\beta}}{\Gamma(\alpha)}=0,  \ \ \ t\in [t_{0}, t_{1}], \\
&x(t_{0})=x_{0}. 
\end{eqnarray*} 
\end{theorem}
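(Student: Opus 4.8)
The plan is to mimic the proof of Theorem \ref{theo43}, interchanging the roles of the two endpoints: here the initial value is fixed, so the admissible variations are those $h \in C^{\alpha}([t_0,t_1],\mathbb{R}^n)$ with $h(t_0)=0$, while $h(t_1)$ is free and is paid for by the terminal cost $l$. First I would fix such an $h$ and differentiate $\phi(\lambda)=J(x^0+\lambda h)$ at $\lambda=0$; since $x^0$ delivers a weak local minimum and $L,L_x,L_y,l$ are smooth near the graph, Fermat's theorem gives $\phi'(0)=0$, i.e.
$$\int_{t_0}^{t_1}(t_1-t)^{\beta-1}\left[\langle L_x(t),h(t)\rangle+\langle L_y(t),(^{c}D_{t_0+}^{\alpha}h)(t)\rangle\right]dt+\langle l_{x_1},h(t_1)\rangle=0$$
for every such $h$.

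Next I would reduce this to a scalar identity by taking $h(t)=\varphi(t)r$ with $\varphi\in C^{\alpha}([t_0,t_1],\mathbb{R})$, $\varphi(t_0)=0$, and $r\in\mathbb{R}^n$ arbitrary. Using $\varphi(t)=\frac{1}{\Gamma(\alpha)}\int_{t_0}^{t}(t-\tau)^{\alpha-1}(^{c}D_{t_0+}^{\alpha}\varphi)(\tau)d\tau$ together with the corresponding formula for $\varphi(t_1)$, and interchanging the order of integration exactly as in the passage performed in the proof of Lemma \ref{lem33} (whose legitimacy, and the continuity of the resulting kernel, are guaranteed by Lemma \ref{lem32}), the condition becomes
$$\int_{t_0}^{t_1}\left\langle (I_{t_1-}^{\alpha}b)(t)+(t_1-t)^{\beta-1}L_y(t)+\frac{(t_1-t)^{\alpha-1}}{\Gamma(\alpha)}l_{x_1},\,r\right\rangle (^{c}D_{t_0+}^{\alpha}\varphi)(t)\,dt=0,$$
where $b(t)=(t_1-t)^{\beta-1}L_x(t,x^0(t),(^{c}D_{t_0+}^{\alpha}x^0)(t))$. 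The decisive point is that, with only $\varphi(t_0)=0$ imposed, the function $(^{c}D_{t_0+}^{\alpha}\varphi)$ ranges over all of $C([t_0,t_1],\mathbb{R})$, without the closing constraint that is present in $C_0^{\alpha}$; hence, restricting the test functions to compact subintervals $[t_0,t_1-\delta]$ on which the coefficient is continuous and then letting $\delta\to0$, the fundamental lemma forces the bracketed coefficient to vanish identically on $[t_0,t_1)$.

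Finally I would split according to the two regimes. If $\alpha<\beta$, I multiply the vanishing identity by $(t_1-t)^{1-\alpha}$ to obtain $(t_1-t)^{1-\alpha}(I_{t_1-}^{\alpha}b)(t)+(t_1-t)^{\beta-\alpha}L_y(t)+\frac{1}{\Gamma(\alpha)}l_{x_1}=0$; the hard part is the terminal behaviour, which I would control by writing $(t_1-t)^{1-\alpha}(I_{t_1-}^{\alpha}b)(t)=(t_1-t)^{\beta-\alpha}\big[(t_1-t)^{1-\beta}(I_{t_1-}^{\alpha}b)(t)\big]$ and invoking Lemma \ref{lem32}, by which the bracketed factor is continuous on $[t_0,t_1]$, so that both of the first two terms tend to $0$ as $t\to t_1^-$ (using $\beta-\alpha>0$). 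Letting $t\to t_1^-$ then yields $l_{x_1}=0$, and the remaining relation is precisely the Euler-Lagrange equation of item 1). If $0<\beta\le\alpha\le1$, I instead multiply by $(t_1-t)^{1-\beta}$, in which case every term is continuous up to $t_1$ (again by Lemma \ref{lem32} for the integral term, and since $\alpha-\beta\ge0$ for the terminal term), so the identity extends by continuity to all of $[t_0,t_1]$ and gives item 2). The condition $x(t_0)=x_0$ is inherited from admissibility. The main obstacle throughout is exactly this control of the singular factor $(t_1-t)^{\beta-1}$ and of $(I_{t_1-}^{\alpha}b)$ at the free endpoint $t_1$, which is what separates the case $\alpha<\beta$ (forcing $l_{x_1}=0$) from the case $\beta\le\alpha$ (where $l_{x_1}$ survives in the equation).
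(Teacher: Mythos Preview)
Your proof is correct and takes a genuinely different route from the paper's. The paper simply says that Theorem \ref{theo46} is proved ``similarly to Theorem \ref{theo43}'', meaning: one first restricts to variations $\varphi \in C_0^{\alpha}([t_0,t_1],\mathbb{R})$ (both endpoints zero) so that Lemma \ref{lem31} applies, obtaining the Euler--Lagrange relation with an undetermined constant $k$ (when $0<\beta\le\alpha\le1$) or with $k=0$ (when $\alpha<\beta$), and then substitutes this back into the first-variation identity for general $\varphi$ with only $\varphi(t_0)=0$; the surviving boundary term $\langle l_{x_1},r\rangle\varphi(t_1)$ then identifies $k=-\langle l_{x_1},r\rangle$, respectively forces $l_{x_1}=0$. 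You instead exploit from the outset that, with only $\varphi(t_0)=0$ imposed, $^{c}D_{t_0+}^{\alpha}\varphi$ ranges over all of $C([t_0,t_1],\mathbb{R})$, so after absorbing the $\varphi(t_1)$ term into the integral the ordinary fundamental lemma (localized away from $t_1$) kills the full integrand at once, and the transversality condition $l_{x_1}=0$ in the case $\alpha<\beta$ falls out by letting $t\to t_1^-$ with the help of Lemma \ref{lem32}. Your argument is more direct here and makes transparent why no free constant survives; the paper's two-step argument has the advantage of being uniform across all four problems (P), (P$_1$), (PB), (P$_2$), always passing through the generalized Du Bois--Reymond Lemma \ref{lem31}.
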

Theorem \ref{theo46} is proved similary to Theorem \ref{theo43}.

The Legendre conditions for problem (P$_{2}$)are proved in papers
\cite{18,33,32}.

\section{ Conclusion}\label{sec5}

In this paper, we study the problems of minimizing a functional
depending on the Caputo fractional derivative of order $0<\alpha
\leq 1$ and the Riemann-Lioville fractional integral of order
$\beta >0$. We have generalized one of the most important lemmas
of the calculus of variations, the fundamental Du Bois-Reymond
lemma, to the fractional case. From this lemma it is clear that the dependence between the
parameters $\alpha$ and $\beta$ plays a significant role. The new
lemma allowed us to prove the fractional Euler-Lagrange equation
in integral form. Note that here we also take into account the
relationship between the parameters $\alpha$ and $\beta$, which
has not taken into account in previous works in the literature.
The usefulness of the obtained conditions is illustrated by
examples. Unlike a number of papers in the literature, here we
show that the standard proof of the Legendre condition in the
classical case $\alpha=1$ can be adapted to the fractional case
$0<\alpha<1$ when dealing with final constraints. To prove the
Legendre condition, we had to introduce a special variation
depending on the parameter $k$. Note that the method presented
here can be used to derive optimality conditions for other
fractional variational calculus problems.
\\
\section*{Declarations}
{\bf Conflict of interest}  The author declares that he has no conflict of interest. No funds, grants, or other support
were received.


\begin{thebibliography}{99}

\bibitem{1n} Agarwal, R.P., Benchohra, M., Hamani, S.: A survey on existence results for boundary value problems of nonlinear fractional differential equations and inclusions. Acta Appl.Math.{\bf 109}, 973-1033 (2010)

\bibitem{2}  Agrawal, O.P.: Formulation of Euler-Lagrange equations for fractional variational problems. J. Math. Anal. Appl. {\bf 272}, 368-379 (2002)


\bibitem{4}  Almeida, R., Malinovska, A., Torres, D.: Fractional Euler-Lagrange differential equation via Caputo derivatives. Fract.Dynam.Contr. {\bf II}, 109-118 (2012)

\bibitem{5}  Atanackovic, T.M., Konjik, S., Pilipovic, S.: Variational problems with fractional derivatives: Euler-Lagrange equations. J. Phys. A. {\bf 41}, 095201 (2008)

\bibitem{6} Baleanu, D., Trujillo, J.: On exact solutions of a class of fractional Euler-Lagrange equations. Nonlin.Dynam. {\bf 52}, 331-335 (2008)

\bibitem{7}  Bastos, N.R.O.,  Ferreira, R.A.C.,  Torres, D.F.M.: Necessary optimality conditions for fractional difference problems of the calculus of variations. Discrete
Contin. Dynam. Syst. {\bf 29}, 417-437 (2011)


\bibitem{8}  Bergounioux, M., Bourdin, L.: Pontryagin maximum principle for general Caputo fractional optimal control problems with Bolza cost and terminal constraints.
ESAIM Control Optim. Calc. Var. {\bf 26}, 38 (2020).

\bibitem{9} Bourdin, L.: Existence of a weak solution for fractional Euler-Lagrange equations. J. Math. Anal. Appl. {\bf 399}, 239-251 (2013)

\bibitem{10}  Bourdin, L.,  Ferreira, R.A.C.: Legendre's necessary condition for fractional Bolza functionals with mixed initial/final constraints. J.Opim. Theory Appl.
{\bf 190}, 672-708 (2021)

\bibitem{11} Bourdin, L.: Contributions to optimal control theory with fractional and time scale calculi, and to variational analysis in view of shape optimization problems in contact mechanics.  1-183 (2020)


\bibitem{12} Comte, F.: Operateurs fractionnaires en econometrie et en finance. Prepublication MAP5, (2001)

\bibitem{13} Ekeland, I.: On the variational principle. J. Math. Anal. Appl. {\bf 47}, 324-353 (1974)


\bibitem{14} Ferreira, R.A.C.:  Fractional calculus of variations: a novel way to look at it. Fract. Calc. Appl.
Anal. {\bf 22}, 1133-1144 (2019)

\bibitem{15} Gelfand, I.M.,  Fomin, S.V.: Calculus of variations. Revised English edition translated and edited by Richard A. Silverman. Prentice-Hall, Inc., Englewood
Cliffs, N.J. (1963)

\bibitem{18}  Guo, T.L.: The necessary conditions of fractional optimal control in the sense of Caputo. J. Optim. Theory Appl. {\bf 156}, 115-126 (2013)

\bibitem{20}  Hestenes, M.R.: Calculus of variations and optimal control theory. Robert E. Krieger Publishing
Co., Inc., Huntington, N.Y. (1980). Corrected reprint of the 1966 original.

\bibitem{21}  Hilfer, R.: Applications of fractional calculus in physics. World Scientific, River Edge, New
Jersey (2000)

\bibitem{22} Hilfer, R.: Fractional calculus and regular variation in thermodynamics. In Applications of fractional calculus in physics, 429-463. World Sci. Publ., River Edge, NJ, (2000)

\bibitem{23}  Kilbas, A.A., Srivastava, H.M.,  Trujillo, J.J.: Theory and applications of fractional differential equations. volume 204 of North-Holland Mathematics Studies. Elsevier
Science B.V., Amsterdam (2006)

\bibitem{24} Klimek, M.: Solutions of Euler-Lagrange equations in fractional mechanics. In XXVI Workshop
on Geometrical Methods in Physics, volume 956 of AIP Conf. Proc., 73-78. Amer. Inst. Phys., Melville, NY (2007)

\bibitem{21n}  Liu, C., Yu, C., Gong, Z., Cheong, H.T.,  Teo, K.L.: Numerical Computation of Optimal Control Problems with Atangana-Baleanu Fractional Derivatives, J.Optim.Theory Appl. {\bf 197}, 798-816 (2023)

\bibitem{32n} Li, W., Wang, S., Rehbock, V.: Numerical Solution of Fractional Optimal Control. J. Optim.Theory Appl. {\bf 180}, 556-573 (2019)

\bibitem{22n} Mahmudov, E.N.: Approximation and Optimization of Discrete and Differential Inclusions, Elsevier, Boston, USA (2011)
\bibitem{23n} Mahmudov, E.N.: Optimization of higher-order differential inclusions with special boundary value conditions. J. Optim. Theory Appl. {\bf 192}, 36-55 (2022)
\bibitem{24n} Obukhovskii, V., Yao, J.C.: Some existence results for fractional functional differential equations, Fixed Point Theory. {\bf 11}, 85-96 (2010)
\bibitem{25n} Qi Lü,  Zuazua, E.: On The Lack of Controllability of Fractional in Time ODE and PDE, Math. Contr. Sign. Syst. {\bf 28}, 1-21 (2016)

\bibitem{27}  Riewe, F.: Nonconservative Lagrangian and Hamiltonian mechanics. Phys. Rev. E (3), {\bf 53}, 1890-1899 (1996)

\bibitem{28} Sabatier, J.,   Agrawal, O.P.,  Machado, J.A.T.  Advances in fractional calculus. Springer, Dordrecht (2007)

\bibitem{29}  Samko, S.G., Kilbas, A.A.,  Marichev, O.I.: Fractional integrals and derivatives. Gordon and Breach Science Publishers, Yverdon (1993). Theory and
applications, edited and with a foreword by S. M. Nikolskiui, translated from the 1987 Russian original, revised by the authors.

\bibitem{30} Song, C.J., Zhang, Y.: Noether symmetry and conserved quantity for fractional Birkhoffian mechanics and its applications. Fract. Calc. Appl. Anal. {\bf 21}, 509-526 (2018)

\bibitem{31} Van Brunt, B.: The calculus of variations. Universitext. Springer-Verlag, New York (2004)


\bibitem{33n} Yusubov, Sh.Sh, Mahmudov, E.N.: Optimality conditions of singular controls for systems with Caputo fractional derivatives. J. Indust. Manag. Optim. {\bf 19}, 246-264 (2022)
\bibitem{33} Yusubov, Sh.Sh, Mahmudov, E.N.:  Necessary optimality conditions for quasi-singular controls for systems with  Caputo fractional derivatives. Archiv. Contr.Sci. {\bf 33}, 463-496 (2023)


\bibitem{35n}  Yusubov, Sh.Sh, Mahmudov, E.N.: Necessary and sufficient optimality conditions for fractional Fornasini-Marchesini model, J. Indust. Manag. Optim. {\bf 19}, 7221-7244 (2023)

\bibitem{32} Yusubov, Sh.Sh, Mahmudov, E.N.: Some necessary optimality conditions for systems with fractional Caputo derivatives. J. Indust. Manag. Optim., {\bf 19},8831-8850 (2023)


\end{thebibliography}
\end{document}